\newtheorem{theorem}{Theorem}[section]
\newtheorem{lem}[theorem]{Lemma}
\newtheorem{cor}[theorem]{Corollary}
\theoremstyle{definition}
\newtheorem{defn}[theorem]{Definition}
\newtheorem{ex}[theorem]{Example}
\theoremstyle{remark}
\numberwithin{equation}{section}
\begin{document}
\title[Group-Joined-Semigroups]{
Group-Joined-Semigroups and
their structures}

\author[M.H. Hooshmand]{M.H. Hooshmand }

\address{Department of Mathematics, Shiraz Branch, Islamic Azad University, Shiraz, Iran}

\email{\tt hadi.hooshmand@gmail.com, MH.Hooshmand@iau.ac.ir}

\subjclass[2000]{20M99, 08A99,  20N02}

\keywords{Homogroup, grouplike, group-joined-semigroup,
decomposer function, $f$-multiplication
\indent }
\date{}

\begin{abstract}
Every semigroup containing an ideal subgroup is called a homogroup, and it is a grouplike if and only if
it has only one central idempotent. On the other hand,
 a class of algebraic structures covering group-$e$-semigroups $(G,\cdot,e,\odot)$
has been recently introduced. Here $(G,\cdot,e)$ is a group, $(G,\odot)$ is a semigroup and the $e$-join laws $e\odot xy=e\odot x\odot y$
and $xy\odot e=x\odot y\odot e$ hold. This paper shows
close relations among these algebraic structures and proves that every group-$e$-semigroup is a
group-$e$-homogroup. Also, we
 give some necessary and sufficient conditions for a group-$e$-semigroup to be group-$e$-grouplike. As some results of the study,
we prove several characterizations of identical group-$e$-semigroups, a class of homogroups, and
give several examples such as real $b$-group-grouplikes and the Klein group-grouplike.
\end{abstract}
\maketitle
\section{Introduction and Backgrounds}
\vskip 0.4 true cm
In 1954, G. Thierrin introduced the concept of homogroup, although earlier it was studied
under the title ``semigroups with zeroide elements", by A.H. Clifford and D.D. Miller \cite{Zeroid}. Also,
in 1961 \cite{Homo}, R.P. Hunter studied the structure of some homogroups.
If a semigroup $S$ has a minimal ideal, then it is unique and so the smallest ideal.
It is called the kernel of $S$, and if the kernel is a group, then $S$ is called a
 homogroup. Indeed, a semigroup is a homogroup if and only if it contains an ideal subgroup.
 The ideal subgroup of every homogroup $S$ is unique and its identity is a central idempotent
 and also a zero for all idempotents of $S$.
It is shown that a semigroup $S$ is a homogroup if and only if there is an element in $S$ which is
 divisible on the left and right by any element of $S$ (namely, zeroied or net), and in this case, the kernel
 consists of all such elements. For example, every finite commutative semigroup is a homogroup (see \cite{Clif, Ssemi}).
 On the other hand, the concept of ``Grouplike" has been  introduced and studied by the author \cite{Grpk}.
A homogroup is a grouplike if and only if the set of its central idempotents
is a singleton. In fact, a semigroup $\Gamma$ is a grouplike  if and only if it
contains a unique central idempotent $e$ such that
for every $x\in\Gamma$ there exists a $y\in \Gamma$ in which $xy=yx=e$ (i.e., $e$
is a solvable element that is stronger than the zeroied conception). Therefore,
a grouplike can be denoted by $(\Gamma,\cdot,e)$ where $e$ is the unique central idempotent (solvable) element.
As an application of the topic of grouplikes, all semigroups whose square is a group are characterized in \cite{S2}.
Such a semigroup is also referred to as ``class united grouplike" that is a grouplike $(\Gamma,\cdot,e)$
with the property $exy=xy$ (i.e., $e$ is a bi-identity). Indeed, we have the following equivalent conditions
for a semigroup $S$:\\
(a)  $S$ is a homogroup containing a unique central idempotent $e$
with the property $exy=xy$, for all $x,y\in S$;     \\
(b)  $S$ has the square-group property (i.e., $S^2\leq S$);    \\
(c)  There exists $e\in S$ such that $(S,\cdot,e)$ is a grouplike
with the property $exy=xy$, for all $x,y\in S$.    \\
Also, one can see many other equivalent conditions and their axiomatization in \cite{Grpk, S2}.
Hence, we can determine the structure of a class of homogroups as follows.
Recall that a magma (or groupoid) is a set $X$ equipped with a single binary operation
$\cdot :X \times X \rightarrow X$ and it is denoted by $(X,\cdot)$.
\begin{theorem}
A magma $(S,\cdot)$ is a homogroup with a unique idempotent being a bi-identity if and only if there exists a class
group $(\mathcal{G},\centerdot)$ $($i.e., a group whose elements are sets$)$ and a choice function $\varphi
:\mathcal{G}\rightarrow \cup \mathcal{G}$ such that $\Gamma=
\cup\mathcal{G}$ and $\cdot=\centerdot^\varphi$, where
$
x\centerdot^\varphi y:=\varphi(A_x\centerdot A_y)
$ and $A_x$, $A_y$ are those elements of $\mathcal{G}$ that contain $x$, $y$
respectively.
\end{theorem}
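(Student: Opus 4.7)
The plan is to exploit the equivalence of conditions (a)--(c) already in the text, so that the hypothesis reduces to the statement that $(S,\cdot,e)$ is a grouplike whose central idempotent $e$ satisfies the bi-identity $exy=xy$. In this setting the kernel $G:=S^{2}$ is an ideal subgroup of $S$ with identity $e$, and the map $\pi:S\to G$ defined by $\pi(x):=ex$ is a surjective magma homomorphism restricting to the identity on $G$; these facts I would establish at the outset from the bi-identity law together with the centrality and idempotence of $e$ (the one small computation being $\pi(x)\pi(y)=(ex)(ey)=exy=\pi(xy)$).

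For the forward direction I would introduce the equivalence $x\sim y\iff \pi(x)=\pi(y)$, write $A_{x}$ for its classes, and set $\mathcal{G}:=S/\!\sim$. The key observation is that this partition is a congruence of a particularly strong kind: if $x\sim x'$ and $y\sim y'$ then in fact $xy=x'y'$ as elements of $S$, not merely as classes. The reason is that $xy\in G$, so $xy=e(xy)=(ex)y=(ex')y=x'y$, and a symmetric computation on the right factor then gives $x'y=x'y'$. Hence $A_{x}\centerdot A_{y}:=A_{xy}$ is a well-defined group operation making $\mathcal{G}$ a class group isomorphic to $G$ via $\pi$. Since each class meets $G$ in exactly one point, the rule $\varphi(A):=$ the unique element of $A\cap G$ is a bona fide choice function, and one checks directly that $\varphi(A_{x}\centerdot A_{y})=\varphi(A_{xy})=e(xy)=xy$, i.e., $\centerdot^{\varphi}=\cdot$.

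For the converse I would start from the data $(\mathcal{G},\centerdot,\varphi)$ and set $S:=\bigcup\mathcal{G}$, $\cdot:=\centerdot^{\varphi}$. A preliminary remark is that the elements of $\mathcal{G}$ partition $S$, so that $A_{x}$ is unambiguous. Associativity of $\cdot$ then reduces to associativity of $\centerdot$ via the identity $A_{\varphi(A\centerdot B)}=A\centerdot B$, which holds because $\varphi(A\centerdot B)\in A\centerdot B$. Letting $E$ denote the identity of $\mathcal{G}$ and $e:=\varphi(E)$, the bi-identity $e\cdot(xy)=xy$ is immediate by unwinding the definitions; and any idempotent $x$ forces $A_{x}\centerdot A_{x}=A_{x}$ in the group $\mathcal{G}$, whence $A_{x}=E$ and $x=\varphi(E)=e$. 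Finally, the image $G:=\varphi(\mathcal{G})$ is closed under $\cdot$, is in bijective magma-correspondence with $\mathcal{G}$ (so is a subgroup), and is an ideal since $x\cdot\varphi(A)=\varphi(A_{x}\centerdot A)\in G$; thus $S$ is a homogroup with ideal subgroup $G$.

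The main obstacle I foresee is the strong congruence property used in the forward direction: it is what permits the choice function $\varphi$ to be well-defined, and it is a genuine consequence of the bi-identity law rather than of group-theoretic formalism alone. Once it is in hand, the remainder is routine transport of structure across the isomorphism between $\mathcal{G}$ and $G$ induced by $\pi$.
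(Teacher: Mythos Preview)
Your argument is correct and self-contained. The paper, by contrast, gives no argument at all: its proof consists of the single sentence ``This is a direct result of the above explanations, arguments, and Theorem 2.11 of \cite{Grpk}.'' In other words, the paper invokes the equivalence (a)--(c) to identify the hypothesis with the class-united grouplike condition and then defers the structural characterization entirely to the cited reference. What you have done is essentially reconstruct that external result from scratch: the strong congruence observation (that $x\sim x'$ and $y\sim y'$ force $xy=x'y'$ as \emph{elements}, not merely classes) is exactly the content that makes the choice function $\varphi$ reproduce the original multiplication, and your converse verification is the routine unwinding that the reference presumably also contains. So the two proofs are compatible rather than genuinely different---yours simply supplies the details that the paper outsources.
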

\begin{proof}
This is a direct result of the above explanations, arguments, and Theorem 2.11 of \cite{Grpk}.
\end{proof}
Recall that $Z(S)$ and $It(S)=E(S)$
are the center and the set of all idempotents of $S$,
respectively, and $Zt(S):=Z(S)\cap It(S)$ denotes the set of all
central idempotents of $S$. A subset $A$ of $S$ is said to be left $e$-unital
 if $e$ is a left (local) identity for all elements of $A$. Analogously, the right and two-sided cases are defined.
Hence, $S$ is $e$-unital if and only if it is a monoid with the identity element $e$.
Now, if $M$ is a left or right ideal submonoid (i.e., unitial ideal)  of a semigroup $S$ with the identity $e_M$,
then $M$ is a (two-sided) ideal if and only if $e_M\in Z(S)$. Moreover,
if  $M$ is an ideal submonoid of $S$, then $M=e_MS=Se_M$, hence $M$ is the least ideal
of $S$ containing $e_M$.
For if $M$ is a left ideal, $m\in M$, $x\in S$ and  $e_M\in Z(S)$, then
$$
mx=(me_M)x=m(e_Mx)=m(xe_M)=(mx)e_M\in M,
$$
hence $M$ is an ideal. Conversely, if $M$ is an ideal, then $xe_M, e_Mx\in M$ and so
$$
xe_M=e_M(xe_M)=(e_Mx)e_M=e_Mx,
$$
which means $e_M\in Z(S)$.\\
Note that if  $M$ is an ideal subgroup, then the condition ``containing $e_M$" in the above statement
is extra, and $M$ is the least ideal and also a maximal subgroup of $S$. It is the largest subgroup
if and only if $e_M$ is the only idempotent of $S$.

If a semigroup with a unique central idempotent has an ideal submonoid, then
the ideal submonoid is unique, and also it is the least ideal.
Of course,  an ideal subgroup $W$ of a homogroup is always unique,
maximal subgroup but not maximum (although it is the minimum ideal), and it is the largest subgroup containing $e_W$. Hence,
 $W$ is the largest subgroup if and only if $S$ has only one idempotent, and if and only if
$S$ is a unipotent (standard) grouplike.
The following diagram shows the inclusion relations among
such algebraic structures:
\begin{center}
\begin{tikzpicture}
\draw (-2.2,-2.2) rectangle node {Semigroups whose squares are groups} (10.52,2.7) ;
\draw (-1.4,-1.4) rectangle (9.7,2.2)  node[above left] {Semigroups with a unitial ideal};
\draw (-1.2,-1.2) rectangle (9.2,1.7)  node[above left] {Homogroups};
\draw  (-0.7,-0.7) rectangle  (8.7,1) node[above left] {Grouplikes};
\draw (-0.3,-0.3) rectangle (8,0.5)  node[above left] {Unipotent Grouplikes};
\end{tikzpicture}
\end{center}
The next example shows that the  sub-classes of semigroups in the above diagram are proper, and also
states an important class of semigroups whose square is a group.
\begin{ex} Real and Klein grouplikes are semigroups whose squares
are groups. For constructing them,
consider the additive group $(\mathbb{R},+)$ and fix $b\in
\mathbb{R}\setminus \{ 0\}$.  For each real number $a$, put
$[a]_b=b[\frac{a}{b}]$ and $(a)_b=b( \frac{a}{b})$, where $[t]$ is the integer part of $t$ and $(t):=\{t\}=t-[t]$.
Now, for every $x,y\in
\mathbb{R}$, we put
$x+_by=(x+y)_b$ and call $+_b$ {\em $b$-addition }. Then,
$(\mathbb{R},+_b)$ is a class united grouplike. Also,
$(K=\{ e, a,\eta, \alpha \},\odot)$ , introduced in Example 2.4 of \cite{Grpk} is a grouplike,
namely Klein four-grouplike.\\
For an example of a homogroup that is not a grouplike,
consider the semigroup $\mathbb{N}=\{0,1,2,  \cdots\}$ with the $\min$-binary operation. It is a homogroup
and $Zt(S)=It(S)=S$, hence $S$ is not a grouplike. It is obvious that its kernel (that is $\{0\}$)
is not the largest subgroup (only it is maximal). \\
Here, we construct a class of grouplikes that are not unipotent.
Let $\Omega$ be a set with a special element $0$ and more than two elements. For every $x,y\in \Omega$,
put $xy=x$ if $x,y\neq 0$ and $xy=0$ if $x=0$ or $y=0$. Then $\Omega$
is a grouplike that is not unipotent (standard).\\
Finally,  for an example of unipotent grouplikes that do not have the square group property,
consider the multiplicative semigroup $([0,1),\cdot)$ of (fractional) real numbers.
\end{ex}
Motivated by decomposer and associative functions on groups and semigroups
\cite{Decom}, and also $f$-multiplications and grouplikes \cite{Grpk, Fgrp}, a vast class of new algebraic structures,
namely magma-$e$- magmas, has been introduced and studied in \cite{magmag}.
A set $X$ with two binary operations ``$\cdot$", $\odot$ and containing a fixed element
$e$ is called a {\em left magma-$e$-magma}
(left $e$-magmag for short) if the following property (left $e$-join law) holds
\begin{equation}
e\odot xy=e\odot(x\odot y)\;\; ; \;\; \forall x,y\in X,
\end{equation}
where $x\cdot y$ is denoted by $xy$ and $e\odot xy=e\odot (xy)$.
Right $e$-magmag is defined in an analogous way, and $e$-magmag is
a left and right $e$-magmag, and in such case $e$ is called a joiner.\\
If the first (resp. second) magma is any type of magmas (e.g. monoids, grouplike, epigroup,  band,
homogroup,  group, etc.) then we replace the first (resp. second) magma by the type's
name, in the title.
For instance a left semigroup-$e$-semigroup (or breifly left $e$-semig) is a
left magma-$e$-magma $(S,\cdot,e,\odot)$ such that $(S,\cdot)$ and $(S,\odot)$ are semigroups.
So, $(S,\cdot,e,\odot)$ is a  left $e$-semig if
and only if $(S,\cdot)$, $(S,\odot)$ are semigroups, $e\in S$ and the left $e$-join law holds.
Very often
$e$ is considered as a special element such as left or right identity, idempotent, etc.
In this paper,  if the first semigroup of $(G,\cdot,e,\odot)$ is a
 group, then we suppose that $e$ is the same identity element of $(G,\cdot)$.
It is worth noting that for every non-trivial left $e$-semig $(S,\cdot,e,\odot)$
(i.e., $\cdot\neq \odot$), the
 second semigroup $(S,\odot)$ can not be left $e$-unital, and hence in its title
the second semigroup can not be replaced by  monoid or group.
Finally, $S$ is called left semigroup-joined-semigroup (or josemig)
if it is a left semigroup-$e$-semigroup, for all $e\in S$
(right and two-sided cases  are defined analogously), and it referred as group-joined-semigroup
if the first semigroup is a group.
\begin{ex} ({\bf The real and Klein group-grouplikes})
The structure $(\mathbb{R},+,+_b)$ is a group-joined-grouplike.
It is called {\em real $b$-group-grouplike}, and specially {\em real group-grouplike}
if $b=1$. Also, the Klein four-group $(K,\cdot)$  joined
 with the Klein four-grouplike $(K,\odot)$ forms a group-joined-grouplike.
\end{ex}
\section{Group-$e$-semigroups and its related algebraic structures}
Now, we are ready to show several closed relations among group-$e$-semigroups, homogroups
and  grouplikes.
\begin{theorem}
If  $(G,\cdot, e, \odot)$ is a left  group-$e$-semigroup, then $e\odot G$
is a subgroup of $(G,\odot)$ and the followings are equivalent\\
(a) $e\odot G$ is an ideal subgroup of $(G,\odot)$;\\
(b) $e\odot e\in Z(G,\odot)$ $($equivalently $e\odot e\in Zt(G,\odot))$;\\
(c) $(G,\odot)$ is a homogroup;\\
(d) $e\odot G$ is the largest  subgroup of $(G,\odot)$ containing $e\odot e$  and also the least ideal.
\end{theorem}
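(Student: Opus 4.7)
Let $\epsilon := e\odot e$, $H := e\odot G$, and consider the map $\phi : G \to H$ given by $\phi(g) = e\odot g$. The plan is first to show that $\phi$ is a homomorphism from the group $(G,\cdot)$ into the semigroup $(G,\odot)$. Using $\odot$-associativity, the left $e$-join law (applied both forward and backward), and the fact that $xe = x$ in the group, the computation
\[
(e\odot x)\odot(e\odot y) = \bigl((e\odot x)\odot e\bigr)\odot y = \bigl(e\odot(xe)\bigr)\odot y = (e\odot x)\odot y = e\odot(xy)
\]
identifies $\phi(x)\odot\phi(y)$ with $\phi(xy)$. Since the homomorphic image of a group inside a semigroup is a group, $H$ is a subgroup of $(G,\odot)$ with identity $\phi(e) = \epsilon$ and with inverse map $\phi(g)^{-1} = e\odot g^{-1}$. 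This settles the initial assertion of the theorem.

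Next, independently of (a)--(d), I would observe that $H$ is always a right ideal of $(G,\odot)$: for any $e\odot y\in H$ and $x\in G$, the left $e$-join law gives $(e\odot y)\odot x = e\odot(yx)\in H$. Thus $H$ is a right ideal submonoid with identity $\epsilon$, and the general fact recalled just before the theorem applies: $H$ is a (two-sided) ideal of $(G,\odot)$ iff $\epsilon\in Z(G,\odot)$. Since $\epsilon$ is already the identity of the group $H$, it is automatically idempotent, so the conditions $\epsilon\in Z(G,\odot)$ and $\epsilon\in Zt(G,\odot)$ coincide. This yields $(a)\Leftrightarrow(b)$ together with the parenthetical remark.

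The implication $(a)\Rightarrow(c)$ is simply the definition of a homogroup. For $(c)\Rightarrow(b)$, which I expect to be the main obstacle, assume $(G,\odot)$ is a homogroup with kernel $K$ (a group with identity $e_K\in Zt(G,\odot)$). The decisive step is to show $K\subseteq H$: because $H$ is a right ideal and $K$ is a two-sided ideal of $(G,\odot)$, the intersection $H\cap K$ is a nonempty right ideal of the group $K$, which forces $H\cap K = K$. Hence $e_K\in H$, and since $H$ is itself a group it contains only one idempotent, giving $\epsilon = e_K$. The centrality of $e_K$ then yields (b). As a byproduct one also obtains $H = K$ in this situation.

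Finally, for $(a)\Leftrightarrow(d)$ the implication $(d)\Rightarrow(a)$ is immediate, and the converse follows from the introductory remarks that for any ideal submonoid $M$ of a semigroup one has $M = e_M S = S e_M$ (so $M$ is the least ideal containing $e_M$), and that any ideal subgroup is the largest subgroup containing its identity. Applied to $H$ with identity $\epsilon$, these facts deliver (d).
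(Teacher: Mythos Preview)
Your proof is correct and shares the paper's overall architecture (establish the homomorphism identity $(e\odot x)\odot(e\odot y)=e\odot(xy)$, then cycle through (a)--(d)), but the crucial implication $(c)\Rightarrow(b)$ is handled quite differently. The paper argues computationally: letting $I$ be the ideal subgroup with identity $i$, it uses the extended join law together with the \emph{group} inverse $i^{-1}\in(G,\cdot)$ to show directly that $e\odot e=e\odot i$ and then $i=e\odot e$, whence $I=e\odot G$. Your route is purely ideal-theoretic: since $H=e\odot G$ is a right $\odot$-ideal and $K$ is a two-sided ideal, $H\cap K$ is a nonempty right ideal of the group $K$, forcing $K\subseteq H$ and hence $e_K=\epsilon$. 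Your argument is cleaner and uses less of the specific join-law machinery (only that $H$ is a right ideal, which is immediate); the paper's computation, on the other hand, is self-contained and does not appeal to the general fact that groups have no proper one-sided ideals. A minor stylistic difference is that you invoke the introductory lemma (``a one-sided ideal submonoid is two-sided iff its identity is central'') for $(a)\Leftrightarrow(b)$, whereas the paper re-proves this inside the theorem for the particular case at hand.
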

\begin{proof}
First, note that if $(S,\odot)$ is a semigroup and $e\in S$, then
$e\odot S$ is a right ideal of $S$ containing $e\odot e$. Moreover,
if $e\odot e\odot S=e\odot S$ then  $e\odot S$ is the least right ideal of $S$ containing $e\odot e$.
Also, if $e\odot S$ is a left $e\odot e$-unital sub-semigroup, then
it is the largest $e\odot e$-unital sub-semigroup
(for if $I$ is a  left $e\odot e$-unital sub-semigroup, then $I=e\odot e\odot I=e\odot I\subseteq e\odot S$).\\
Now, consider an arbitrary left $e$-semig
$(S,\cdot, e, \odot)$. It is easy to see that
$$
e\odot (xyz)=e\odot x\odot y\odot z=e\odot (xy) \odot z=e\odot x\odot(yz)
\;\; ; \;\; \forall x,y,z\in S,
$$
and so
$$
e\odot x_1\cdots x_n=e\odot x_1\odot\cdots \odot x_n \;\; ; \;\; \forall n\in \mathbb{N}\; , \; \forall x_1,\cdots,x_n\in S.
$$
Hence, \\
(i) if $e$ is a middle identity of   $(S,\cdot)$ or $(S,\odot)$ (e.g., if it is a left or right identity),
then
\begin{equation}
(e\odot x)\odot(e\odot y)=e\odot x\odot y=e\odot xy\;\; ; \;\; \forall x,y\in S.
\end{equation}
(ii) if $(S,\cdot, e, \odot)$ is a monoid-$e$-semigroup, then $(2.1)$
implies that $e\odot S$ is a sub-monoid of $(S,\odot)$ with the identity $e\odot e$ and
$e\odot e\odot S=e\odot S$. Thus $e\odot S$ is also the least right ideal containing $e\odot e$.
Moreover, if  $e\odot e\in Z(S,\odot)$, then
$$
x\odot e\odot y=x\odot e\odot e\odot y=e\odot e \odot x \odot y\in e\odot S\;\; :
\;\; \forall x,y\in S,
$$
which follows that $e\odot S$ is an ideal of $(S,\odot)$. Conversely,
if $e\odot S$ is an ideal of $(S,\odot)$, then for every $x$ there exists $z\in S$ such that
$x\odot e\odot e=e\odot z$ and so
$$
x\odot e\odot e=e\odot e\odot z=e\odot x\odot e\odot e=
e\odot x\odot e=e\odot x=e\odot e \odot x,
$$
that means $e\odot e\in Z(S,\odot)$.
\\
(iii) If $(G,\cdot, e, \odot)$ is a group-$e$-semigroup, then the argument (ii) together with
$(2.1)$ guarantee that $e\odot S$ is a subgroup of $(G,\odot)$ (because
$(e\odot x^{-1})\odot (e\odot x)=e\odot e=
(e\odot x)\odot (e\odot x^{-1})$, for every $x\in (G,\cdot)$),
and the statements (a) and (b) are equivalent. It is clear that they imply (c), and
if (c) holds, then there is an ideal subgroup of $(G,\odot)$ namely $I$.
If its identity is $i$, then $e\odot i\in I$ and so there is $j\in I$ such that
$e\odot i\odot j=i$.
Hence
 $$
 e\odot e=e\odot i^{-1}\odot i=e\odot i^{-1}\odot i\odot i=e\odot i,
 $$
 thus
 $$
 i=e\odot i\odot j=e\odot e\odot i\odot j=e\odot i=e\odot e.
 $$
 Therefore,
 $$
 I=e\odot e\odot I=e\odot I\subseteq e\odot G=i\odot G\subseteq I,
 $$
 hence $I=e\odot G$ and we arrive at (a) and also (d). Therefore, the proof is complete (since
 (d) implies (c) clearly).
\end{proof}
\begin{cor}
Every  group-$e$-semigroup is a group-$e$-homogroup. Although, there is a
left $($resp. right$)$ group-$e$-semigroup that is not a left $($resp. right$)$ group-$e$-homogroup.
\end{cor}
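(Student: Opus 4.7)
The plan is to split the corollary into its two assertions. For the affirmative part, I invoke Theorem 2.1 applied to $(G,\cdot,e,\odot)$ viewed as a left group-$e$-semigroup; it then suffices to verify condition (b) of that theorem, namely that $e\odot e\in Z(G,\odot)$, to conclude that $(G,\odot)$ is a homogroup and hence that $(G,\cdot,e,\odot)$ is a group-$e$-homogroup.

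The key computation exploits both join laws together with the fact that $e$ is the identity of $(G,\cdot)$. Setting $y=e$ in the left law $e\odot xy=e\odot x\odot y$ yields $e\odot x=e\odot x\odot e$, and setting $x=e$ in the right law $xy\odot e=x\odot y\odot e$ yields $x\odot e=e\odot x\odot e$. Combining these two identities gives $e\odot x=x\odot e$ for every $x\in G$, so $e\in Z(G,\odot)$, and in particular $e\odot e\in Z(G,\odot)$, as required.

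For the second assertion I must produce a left group-$e$-semigroup that is not a left group-$e$-homogroup (the right case then follows by the opposite construction). My proposal is to fix a nontrivial group $(G,\cdot)$ admitting an idempotent endomorphism $\alpha\colon G\to G$ with $\alpha\neq\mathrm{id}$, and to set $x\odot y:=x\cdot\alpha(y)$. Associativity of $\odot$ and the left $e$-join law are straightforward from $\alpha^{2}=\alpha$ and $\alpha(e)=e$, whereas the right law fails as soon as $\alpha$ moves some element. A minimal explicit instance is $G=\mathbb{Z}_2\times\mathbb{Z}_2$ with $\alpha$ the projection to the first factor.

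The expected obstacle is the last step: showing that the $(G,\odot)$ so constructed has no ideal subgroup. The natural candidate $e\odot G=\alpha(G)$ is a $\odot$-subgroup, but fails to be an ideal because for $h\in\alpha(G)$ and general $g\in G$ one has $g\odot h=g\alpha(h)=gh$, which need not lie in $\alpha(G)$. For the four-element instance I plan to rule out every other candidate by direct inspection of the $\odot$-table, using that $(G,\odot)$ is not cancellative once $\alpha$ has nontrivial kernel, so no ideal subgroup can exhaust $G$ either.
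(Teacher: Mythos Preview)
Your argument for the first assertion is correct and coincides with the paper's: both show $e\in Z(G,\odot)$ from the two join laws (you spell out the substitutions, the paper just asserts it), whence $e\odot e\in Z(G,\odot)$ and Theorem~2.1 applies.

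For the counterexample your construction works but is more elaborate than the paper's. The paper simply takes any group $(G,\cdot,e)$ with $|G|>1$ and sets $x\odot y:=x$ (a left-zero semigroup). The left $e$-join law is then trivial, and $(G,\odot)$ is not a homogroup because its only left ideal is $G$ itself, which is not a group. This is in fact the degenerate case $\alpha\equiv e$ of your scheme $x\odot y=x\alpha(y)$, so your family genuinely generalises the paper's example. What the paper's choice buys is that no table inspection is needed. Conversely, your framework explains \emph{why} such examples exist: any non-identity idempotent endomorphism will do.

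Your verification plan for the four-element instance is heavier than necessary. The same one-line argument works for every $\alpha\neq\mathrm{id}$: if $I$ is a nonempty left ideal of $(G,\odot)$ and $i\in I$, then $\{g\odot i:g\in G\}=\{g\alpha(i):g\in G\}=G$, so $I=G$; and $(G,\odot)$ has no left identity (since $\ell\odot x=x$ forces $\ell=e$ and then $\alpha=\mathrm{id}$), hence is not a group. This replaces your proposed table check and handles the general case at once.
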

\begin{proof}
Since in every (two-sided) group-$e$-semigroup $(G,\cdot, e, \odot)$,
$e$ is a central element of the semigroup, then
the first part is concluded. Now, consider an arbitrary group $(G,\cdot, e)$
with $|G|>1$ and define another binary operation in $G$ by $x\odot y=x$ (resp. $x\odot y=y$).
Then, $(G,\cdot, e, \odot)$ is a  left (resp. right) group-$e$-semigroup but
$(G,\odot)$ is not a homogroup.
\end{proof}
\begin{cor}
In every left group-$e$-semigroup $(G,\cdot, e, \odot)$,
we have
\begin{equation}
(e\odot x)_{e\odot G}^{-1}=e\odot x^{-1}=e\odot (e\odot x)^{-1}\;\; ; \;\; \forall x\in G
\end{equation}
where $(e\odot x)_{e\odot G}^{-1}$ $($resp. $x^{-1}$, $(e\odot x)^{-1})$
is as the inverse in the group $e\odot G$ $($resp. $(G,\cdot))$.
\end{cor}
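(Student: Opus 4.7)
The plan is to leverage two facts extracted from the proof of Theorem~2.1: first, that $e\odot e$ is the identity element of the subgroup $e\odot G$; and second, that the identity $(2.1)$ holds, namely $(e\odot u)\odot(e\odot v)=e\odot uv$ for all $u,v\in G$, which is available by part (i) of that proof because $e$ is the identity of the group $(G,\cdot)$ and hence a middle identity.

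First I would establish $(e\odot x)_{e\odot G}^{-1}=e\odot x^{-1}$. Substituting $v=x^{-1}$ (the inverse in $(G,\cdot)$) into $(2.1)$ yields
$$
(e\odot x)\odot(e\odot x^{-1})=e\odot(x\cdot x^{-1})=e\odot e,
$$
and symmetrically on the other side. Since $e\odot e$ is the identity of the group $e\odot G$ and $e\odot x^{-1}$ visibly belongs to $e\odot G$, this element must be the two-sided inverse of $e\odot x$ inside the subgroup.

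Next I would establish $e\odot x^{-1}=e\odot(e\odot x)^{-1}$ by a ``transfer'' argument through $e\odot$. Starting from $(e\odot x)^{-1}\cdot(e\odot x)=e$ in $(G,\cdot)$ and applying $e\odot$ to both sides, the left $e$-join law (applied with the two $\cdot$-factors $(e\odot x)^{-1}$ and $e\odot x$, viewed simply as elements of $G$) gives
$$
\bigl[e\odot(e\odot x)^{-1}\bigr]\odot(e\odot x)=e\odot e.
$$
The left factor $e\odot(e\odot x)^{-1}$ obviously lies in $e\odot G$, so it serves as a left inverse of $e\odot x$ inside the group $e\odot G$; hence it equals $(e\odot x)_{e\odot G}^{-1}$, which by the previous paragraph equals $e\odot x^{-1}$.

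The most delicate point, and the one most likely to invite a notational slip, is keeping the two different inverse operations straight: the superscript $^{-1}$ without a subscript always refers to the group $(G,\cdot)$, whereas $^{-1}_{e\odot G}$ refers to the subgroup. In the second step one must resist the temptation to ``open up'' $(e\odot x)^{-1}$ and instead simply treat it as an opaque element of $G$ to which the $e$-join law is applied; the rest is then just recognizing the resulting identity as a left-inverse equation inside the subgroup $e\odot G$ whose identity is $e\odot e$.
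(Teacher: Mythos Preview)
Your argument is correct and is essentially the approach the paper intends: the corollary is stated there without proof, as the first equality is literally the computation $(e\odot x)\odot(e\odot x^{-1})=e\odot e$ already carried out in part~(iii) of the proof of Theorem~2.1, and the second equality follows by the same mechanism once one notes $e\odot(e\odot x)=e\odot x$. One small clarification worth making explicit in your second step: the passage from $e\odot\bigl[(e\odot x)^{-1}\cdot(e\odot x)\bigr]$ to $\bigl[e\odot(e\odot x)^{-1}\bigr]\odot(e\odot x)$ uses not only the left $e$-join law but also the associativity of $\odot$, via $e\odot(u\odot v)=(e\odot u)\odot v$; this is exactly the identity $e\odot(uv)=e\odot u\odot v$ recorded at the start of the proof of Theorem~2.1.
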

\begin{cor}
For every left group-$e$-semigroup $(G,\cdot, e, \odot)$ the followings are equivalent\\
(a) $(G,\cdot, e, \odot)$ is a group-$e$-grouplike;\\
(b) $e\odot e$ is an identity for the semigroup $Zt(G,\odot)$;\\
(c) $Zt(G,\odot)=\{ e\odot e\}$.\\
Also, there is a group-$e$-homogroup that is not a group-$e$-grouplike.
\end{cor}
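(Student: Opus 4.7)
The plan is to rely on Theorem 2.1 together with the standard homogroup fact, recalled in the introduction, that in any homogroup the identity of the kernel (the ideal subgroup) is a zero for every idempotent of the ambient semigroup. For (a)$\Rightarrow$(c): if $(G,\odot)$ is a grouplike then it is in particular a homogroup, so by Theorem 2.1 the element $e\odot e$ lies in $Zt(G,\odot)$ and is precisely the identity of the ideal subgroup $e\odot G$; since a grouplike has a unique central idempotent, one concludes $Zt(G,\odot)=\{e\odot e\}$. The implication (c)$\Rightarrow$(b) is immediate, because a one-element semigroup has its unique element as identity.

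For the main implication (b)$\Rightarrow$(a), I would first note that the hypothesis in (b) forces $e\odot e$ itself to belong to $Zt(G,\odot)$, since it serves as identity for that semigroup. By Theorem 2.1 this makes $(G,\odot)$ a homogroup with least ideal $e\odot G$ whose identity is $e\odot e$. Invoking the intro fact, the kernel identity satisfies $z\odot(e\odot e)=e\odot e$ for every idempotent $z\in It(G,\odot)$, and in particular for every $z\in Zt(G,\odot)$. But the hypothesis (b) gives $z\odot(e\odot e)=z$ for each such $z$. Comparing these two identities yields $z=e\odot e$, hence $Zt(G,\odot)=\{e\odot e\}$; combined with the homogroup property this says $(G,\odot)$ has a unique central idempotent, i.e.\ it is a grouplike, which is (a).

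For the additional assertion I would exhibit a small explicit example. Take $G=\{0,1\}$, let $\cdot$ be addition modulo $2$ so that $(G,\cdot,0)$ is the cyclic group of order two, and let $\odot=\min$ with $0<1$. Then $(G,\odot)$ is a commutative semilattice, hence a homogroup with kernel $\{0\}$, yet every element is a central idempotent, so $Zt(G,\odot)=\{0,1\}$ and $(G,\odot)$ is not a grouplike. The $0$-join laws reduce to the tautology $0=0$ because $0\odot t=0$ for every $t\in G$, so $(G,\cdot,0,\odot)$ is a (two-sided) group-$0$-semigroup; by construction it is therefore a group-$0$-homogroup that is not a group-$0$-grouplike. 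The only real obstacle I anticipate is notational: making precise in (b) that $e\odot e$ itself lies in $Zt(G,\odot)$, so that the phrase ``identity of the semigroup $Zt(G,\odot)$'' is unambiguous; once this is clarified, the proof is a clean assembly of Theorem 2.1 with the kernel-identity-as-zero property.
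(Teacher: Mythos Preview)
Your proof is correct, and your example is precisely the $n=2$ case of the paper's example $(\mathbb{Z}_n,+_n,0,\min)$. The equivalence (a)$\Leftrightarrow$(c) and the implication (c)$\Rightarrow$(b) are handled the same way in both.

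The genuine difference is in (b)$\Rightarrow$(c). The paper does \emph{not} pass through the homogroup/kernel machinery; instead it gives a direct computation inside the left $e$-join law, using the inverse in $(G,\cdot)$: for $\delta\in Zt(G,\odot)$,
\[
\delta=(e\odot e)\odot\delta=(e\odot\delta^{-1}\odot\delta)\odot\delta
      =e\odot\delta^{-1}\odot(\delta\odot\delta)
      =e\odot\delta^{-1}\odot\delta=e\odot e,
\]
where $e\odot e=e\odot(\delta^{-1}\delta)=e\odot\delta^{-1}\odot\delta$ comes from the join law. This argument is entirely internal to the group-$e$-semigroup axioms and never invokes Theorem~2.1 or the ``kernel-identity-is-a-zero'' fact. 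Your route, by contrast, first uses Theorem~2.1 to recognise $(G,\odot)$ as a homogroup with kernel $e\odot G$, and then applies the general homogroup fact from the introduction that the kernel identity annihilates every idempotent. What your approach buys is conceptual clarity and no manipulation of the join law; what the paper's approach buys is self-containment (no appeal to background homogroup theory) and a demonstration that the group structure of $(G,\cdot)$ alone, via $\delta^{-1}$, already forces the collapse of $Zt(G,\odot)$.
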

\begin{proof}
From Theorem 2.1 it is obvious that (a) and (c) are equivalent and (c) implies (b).
Now, if (c) holds, then for every $\delta\in Zt(G,\odot)$ we have
$$
\delta=e\odot e\odot \delta=e\odot \delta^{-1}\odot \delta\odot \delta=e\odot \delta^{-1}\odot \delta=
e\odot e,
$$
and so we arrive at (c).\\
Now, fix
an integer $n>1$ and consider
the group of the least non-negative residues modulo $n$
$\mathbb{Z}_n=\{ 0, 1, 2, 3, 4, ..., n - 1\}$ (with the $n$-addition $+_n$,
that is isomorphic to the quotient group
$\mathbb{Z}/n\mathbb{Z}=\overline{\mathbb{Z}}_n=\{ \overline{0}, \overline{1}, \overline{2}, ..., \overline{n-1}
\}$). Then, $(\mathbb{Z},+_n, 0, \min)$ is a
group-$0$-homogroup that is not a group-$0$-grouplike.
\end{proof}
According the proof of Theorem 2.1, we find that some results of left group-$e$-semigroups are also valied for $e$-semigs
as  more general algebraic structures.
\begin{cor}
Let $(S,\cdot, e, \odot)$ be a left $e$-semig, then\\
(a) If $e\odot (xe)=e\odot x$ for all $x$ $($e.g., if $(S,\cdot)$ is a right $e$- unital semigroup$)$,
 then $(e\odot S,\odot)$ is a right $e\odot e$-unital sub-semigroup of $(S,\odot)$.\\
 (b) If  $(S,\cdot, e, \odot)$ is a left monoid-$e$-semigroup,
 then $(e\odot S,\odot)$ is the least right ideal of $(S,\odot)$
containing $e\odot e$  and also  the largest $e\odot e$-unitial submonoid.\\
Moreover, the followings are equivalent:\\
($b_1$) $e\odot S$ is an ideal of $(S,\odot)$,\\
($b_2$) $e\odot e\in Z(S,\odot)$,\\
($b_3$) $e\odot S$ is a left ideal of $(S,\odot)$,\\
($b_4$) $e\odot S$ is the least ideal of $(S,\odot)$ containing $e\odot e$
and also the largest $e\odot e$-unitial submonoid.
\end{cor}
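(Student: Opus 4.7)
The plan is to mirror the logic already assembled inside the proof of Theorem~2.1, but to isolate exactly which hypotheses each step needs. The two workhorses throughout will be the iterated left $e$-join law $e\odot x_1\cdots x_n=e\odot x_1\odot\cdots\odot x_n$ together with its converse reading, plus associativity of $\odot$. I will also freely invoke the two abstract observations recorded in the preamble of Theorem~2.1's proof, namely that $e\odot e\odot S=e\odot S$ forces $e\odot S$ to be the least right ideal containing $e\odot e$, and that a left $(e\odot e)$-unital sub-semigroup of the form $e\odot S$ is automatically the largest such.

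For part~(a), I would first check closure of $e\odot S$ under $\odot$ via $(e\odot x)\odot(e\odot y)=e\odot x\odot e\odot y=e\odot(xey)$, using the iterated join law in the converse direction, so that $e\odot S$ is a sub-semigroup at once. For right $(e\odot e)$-unitality I would reduce $(e\odot x)\odot(e\odot e)=e\odot x\odot e\odot e$ by applying twice the chain $e\odot x\odot e=e\odot(xe)=e\odot x$, whose last equality is precisely the hypothesis of~(a). So (a) is essentially a single calculation.

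For part~(b), since $(S,\cdot)$ is now a monoid with identity $e$, the hypothesis of~(a) is automatic, so $e\odot S$ is already a right $(e\odot e)$-unital sub-semigroup. Left unitality then follows from $(e\odot e)\odot(e\odot x)=e\odot(eex)=e\odot x$, and $e\odot e\odot S=e\odot S$ is forced by $e\odot x=e\odot(ex)=e\odot e\odot x$. Feeding these into the two preamble observations delivers the extremality statements in~(b). For the equivalences, I would prove $(b_2)\Rightarrow(b_1)$ by $x\odot(e\odot y)=x\odot e\odot e\odot y=e\odot e\odot x\odot y=e\odot(xy)\in e\odot S$, so $e\odot S$ is a left ideal and hence two-sided. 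I would handle $(b_1)\Rightarrow(b_2)$ and $(b_3)\Rightarrow(b_2)$ uniformly: from $x\odot e\odot e\in e\odot S$, write $x\odot e\odot e=e\odot z$, then apply $e\odot(-)$ to both sides and simplify via $e\odot x\odot e\odot e=e\odot(xee)=e\odot x$ and $e\odot e\odot z=e\odot z$ to conclude $e\odot x=x\odot(e\odot e)$; combined with $e\odot x=(e\odot e)\odot x$ this gives $e\odot e\in Z(S,\odot)$. Finally $(b_2)\Rightarrow(b_4)\Rightarrow(b_1)$ is immediate from the extremality already proven.

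The main obstacle I expect is bookkeeping rather than depth: the forward and converse readings of the join law are used repeatedly and in different directions, and the hypothesis of~(a) is calibrated precisely to replace the role played in Theorem~2.1 by ``$e$ is a monoid identity''. The delicate point is verifying that the derivation of $(b_2)$ from $(b_3)$ never silently invokes two-sidedness, since that is exactly what we are trying to recover.
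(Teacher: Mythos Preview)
Your proposal is correct and is precisely the extraction from the proof of Theorem~2.1 that the paper intends; the paper gives no separate argument for this corollary beyond the remark that it follows from that proof, and your calculations line up step for step with parts~(i)--(ii) there, including the derivation of $e\odot e\in Z(S,\odot)$ from the ideal hypothesis. Your observation that the closure of $e\odot S$ in part~(a) needs no hypothesis at all (via $e\odot x\odot e\odot y=e\odot(xey)$) and that the condition $e\odot(xe)=e\odot x$ is only used for right $(e\odot e)$-unitality is a slight sharpening of the paper's presentation, which in Theorem~2.1(i) passes through the stronger middle-identity formula~(2.1); likewise your uniform treatment of $(b_1)\Rightarrow(b_2)$ and $(b_3)\Rightarrow(b_2)$ makes explicit a step the paper leaves implicit.
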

{\bf $e$-Joiner maps.}
Let $(S,\cdot, e, \odot)$ be a left or right $e$-semig. Then we have
the essential maps $J_e=J^\ell_e$, $J^e=J^r_e$ from $S$ into $S$ defined by
$J_e(x)=e\odot x$, $J^e(x)=x\odot e$ (left, right  $e$-joiner map,
respectively).
A set $S$ with two associative binary operations ``$\cdot$" and $\odot$
is a left (resp. right) $e$-semig if and only if
 $J_e(xy)=J_e(x\odot y)$ (resp. $J^e(xy)=J^e(x\odot y)$), for all $x,y\in S$.
The $e$-maps have many properties and
 play important role for study and characterization of  these
algebraic structures. For example, $J_e J^e=J^eJ_e$, and $J_e$ is right $e$-periodic
(i.e. $J_e(xe)=J_e(x)$, for all $x$)
if and only if $J_eJ^e=J_e$. \\
Now, we want to show that $e$-map has the mentioned and some other properties in
$e$-semigs. This is a direct result of Lemma 2.7 of \cite{magmag}.
\begin{lem}
Assume $(S,\cdot, e, \odot)$ is a left $e$-semig. \\
(A) If $J_e$ is right $e$-periodic, then:\\
$J_e:(S,\cdot)\rightarrow (S,\odot)$ is a homomorphism;\\
$J_e:(S,\odot)\rightarrow (S,\odot)$ is an endomorphism and right canceler;\\
$J_e:(S,\cdot)\rightarrow (S,\cdot)$ is right canceler.\\
(B) If  $J_e$ is left $e$-periodic, then
$J_e$ is left canceler from the both semigroups to themselves.\\
(C) If $S$ is a left monoid-$e$-semigroup, then $J_e$ is $e$-periodic, homomorphism
from the both semigroups into the second semigroup and
\begin{equation}
\begin{array}{c}
 J_e(xy)=J_e(x\odot y)=J_e(x)\odot J_e(y)=J_e(J_e(x)y)
=J_e(xJ_e(y))\\
=J_e(J_e(x)\odot y)=J_e(x\odot J_e(y))\;\; ; \;\; \forall x,y\in S.
\end{array}
\end{equation}
\end{lem}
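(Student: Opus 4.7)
The plan is to derive everything from the iterated left $e$-join law $e\odot x_1\cdots x_n = e\odot x_1\odot\cdots\odot x_n$ (established in the proof of Theorem~2.1) combined with a clean rewriting of the two periodicity hypotheses. Applying the left $e$-join to $e\odot(xe)$ shows that $e\odot xe = e\odot x\odot e$, so the condition $J_e(xe)=J_e(x)$ defining right $e$-periodicity is equivalent to the single identity $e\odot x\odot e = e\odot x$; symmetrically, left $e$-periodicity is equivalent to $e\odot e\odot x = e\odot x$. These say precisely that $e$ acts as a right (respectively left) identity, under $\odot$, on the image $e\odot S$.

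For part (A), once the identity $e\odot x\odot e = e\odot x$ is in hand, both homomorphism claims collapse into one short computation:
\begin{equation*}
(e\odot x)\odot(e\odot y) \;=\; (e\odot x\odot e)\odot y \;=\; e\odot x\odot y \;=\; e\odot xy \;=\; e\odot(x\odot y),
\end{equation*}
whose outer equalities read as $J_e(x)\odot J_e(y)=J_e(xy)$ (homomorphism $(S,\cdot)\to(S,\odot)$) and $J_e(x)\odot J_e(y)=J_e(x\odot y)$ (endomorphism of $(S,\odot)$). Part (B) is the mirror image using $e\odot e\odot x = e\odot x$. For the right-canceler clauses, the strategy is to reduce an equation of the form $u\odot J_e(a) = v\odot J_e(a)$ (or its $\cdot$-analogue) to an equation inside $e\odot S$ after applying $J_e$ and using the identities just derived; the cleanest route, however, is to quote Lemma~2.7 of \cite{magmag}, whose magma-level argument transfers verbatim under associativity.

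For (C), the hypothesis that $(S,\cdot)$ is a monoid with identity $e$ makes both $xe=x$ and $ex=x$ automatic, so $J_e$ is simultaneously left and right $e$-periodic and the conclusions of (A) and (B) apply, giving the homomorphism property from either semigroup into $(S,\odot)$. The displayed chain (2.3) then unwinds uniformly: each of the seven intermediate forms reduces, by the left $e$-join law together with the identities $e\odot e\odot x = e\odot x = e\odot x\odot e$, to the common value $e\odot x\odot y$. For example,
\begin{equation*}
J_e\bigl(J_e(x)\,y\bigr) \;=\; e\odot(e\odot x)\odot y \;=\; e\odot e\odot x\odot y \;=\; e\odot x\odot y \;=\; J_e(xy),
\end{equation*}
and the remaining equalities in (2.3) are handled analogously by inserting or deleting copies of $e$ at positions permitted by the two periodicity identities.

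The main obstacle I expect is really bookkeeping: one has to be disciplined about which operation is acting in which slot of a mixed expression, and one must check that the two periodicities give exactly the insertions of $e$ needed in each of the seven forms of (2.3). The canceler clauses are the only substantive novelty beyond the left $e$-join law itself, and rather than reproving them from scratch I would defer to the cited Lemma~2.7.
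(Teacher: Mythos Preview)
Your approach matches the paper's, which simply records that the lemma ``is a direct result of Lemma 2.7 of \cite{magmag}'' and gives no further argument; your explicit computations for (A), (B), and (C) are consistent with, and more detailed than, that bare citation.

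One correction, though: you have misread what ``right canceler'' means in this paper. By the definitions listed just after the lemma, $f$ is a right canceler when $f(xf(y)) = f(xy)$ for all $x,y$ --- a functional identity, not a cancellation law. Your proposed strategy of ``reducing an equation of the form $u\odot J_e(a) = v\odot J_e(a)$'' is therefore aimed at the wrong target. Fortunately the actual property is no harder than what you already handle: under right $e$-periodicity,
\[
J_e\bigl(x\cdot J_e(y)\bigr) \;=\; e\odot\bigl(x\cdot(e\odot y)\bigr) \;=\; e\odot x\odot e\odot y \;=\; (e\odot x\odot e)\odot y \;=\; e\odot x\odot y \;=\; J_e(xy),
\]
and the $(S,\odot)$ case is the same line with the first step omitted. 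This is precisely the computation you carry out in part~(C) for $J_e(xJ_e(y))$, so you already have the method in hand --- only the label was off.
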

By using the above lemma, the next theorem characterizes the $e$-maps
of group-$e$-semigroups as functions in the group $(G,\cdot)$ (although they are defined in the semigroup
$(G,\odot)$)
and shows that they are factor projections of the
group with respect to a normal subgroup, exactly. Indeed,
for every arbitrary function $f$ from $G$ to $G$, we define (the conjugate functions)
$f^*$ and $f_*$ by the equations  $x=f^*(x)f(x)=f(x)f_*(x)$ for all $x\in G$.
If $f(x)=f(y)$ then $x=f^*(x)f(y)=f(y)f_*(x)$, but the converse
is valid if $f$ is decomposer and we have the following
definitions (see \cite{Decom}).
We call the function $f$:\\
(a)  right (resp. left) decomposer if
$$f(f^*(x)f(y))=f(y)\;\;\; \mbox{(resp. }f(f(x)f_*(y))=f(x)) \;\;\; :\;  \forall x,y\in G.$$
(b) right (resp. left) strong decomposer if
$$f(f^*(x)y)=f(y)\;\;\; (\mbox{resp. }f(xf_*(y))=f(x)) \;\;\; :\;  \forall x,y\in G.$$
(c) {\em right canceler } (resp. left canceler) if
$$f(xf(y))=f(xy)\; (\mbox{resp.}\; f(f(x)y)=f(xy))\; :\;\;\; \forall x,y\in G.$$
(d) {\em associative} if
$$f(xf(yz))=f(f(xy)z)\; :\;\;\; \forall x,y,z\in G.$$
(e) {\em strongly associative} if
$$f(xf(yz))=f(f(xy)z)=f(xyz)\; :\;\;\; \forall x,y,z\in G.$$
Note that $f$ is {\em decomposer} or {\em two-sided decomposer}
(resp. canceler) if it is left and right decomposer (resp. canceler). These functions are characterized
in terms of the projections of factor subsets of groups in \cite{Decom}. Hence, we recall the concepts as follows.\\
Let $A$ and $B$ be arbitrary subsets of a group $(G,\cdot)$. Then
the product $AB$  is called  direct and it is denoted by $A \cdot B$
if the representation of every its element by $x=ab$  with $a\in A$, $b\in B$ is unique.
By the notation $G=A \cdot B$ we mean $G=A B$ and the product $A B$ is direct
(a factorization of $G$ by two subsets). In this case, we call $A$ (resp. $B$) a left (resp. right) factor of $G$ related
to $B$ (resp. $A$).\\
If $G=A\cdot B$, then
we have the surjective maps $P_{A}:G\longrightarrow A$, $P_{B} :G\longrightarrow B$
defined by $P_{A}(x)=a$, $P_{B}(x)=b$, for every $x\in
G$ with $x=ab$, where $a\in \Delta$ and $b\in \Omega$. The maps $P_{A}$, $P_{B}$ are
called left and right projections with respect to the factorization $G=\Delta\cdot\Omega$.
Also, we call a function $f:G\longrightarrow G$ left (resp. right)
projection, if there exists a left (resp. right) factor $A$ (resp. $B$)
of $X$ such that $f=P_{A}$ (resp. $f=P_{B}$).
\\
Now, we come back to our topic. The relation $(2.3)$ implies that $J_e$ is strong decomposer, thus it satisfies the equation $f(f^*(y)x)=f(xf_*(y))=f(x)$
and we conclude that there exist $\Delta\unlhd G$ and $\Omega \subseteq G$ such that
$G=\Delta\cdot\Omega=\Delta\cdot\Omega$ and $f=P_\Omega$ (by using Theorem 3.8 of \cite{Decom}). Hence, we arrive at the next theorem.
\begin{theorem}
 If $(G,\cdot, e, \odot)$ is a left group-$e$-semigroup, then
$J_e:(G,\cdot)\rightarrow (G,\cdot)$ is strong decomposer and so
there exist $\Delta\unlhd G$ and $\Omega \subseteq G$ such that
$G=\Delta\cdot\Omega=\Delta\cdot\Omega$ and $\Delta=J_e^*(G)
=J_{e_*}(G)$, $J_e=P_\Omega$. In other
words, the left $e$-map of a left group-$e$-semigroup is a projection
of the group, for some factor subset with respect to a normal subgroup.
\end{theorem}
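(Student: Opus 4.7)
The plan is to invoke Lemma 2.6(C) to obtain (2.3) for $J_e$, upgrade those identities to the two-sided strong decomposer property on the group $(G,\cdot)$, and then quote Theorem 3.8 of \cite{Decom}.

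Since every group-$e$-semigroup is in particular a monoid-$e$-semigroup, Lemma 2.6(C) applies. From the homomorphism identity $J_e(xy)=J_e(x)\odot J_e(y)$ in (2.3), together with the canceler identity $J_e(xJ_e(y))=J_e(xy)$ (which with $x=e$ yields $J_e\circ J_e=J_e$), the defining decomposition $x=J_e^*(x)J_e(x)$ gives
\[J_e(x)=J_e(J_e^*(x))\odot J_e(J_e(x))=J_e(J_e^*(x))\odot J_e(x).\]
Since $e\odot G=J_e(G)$ is a subgroup of $(G,\odot)$ with identity $e\odot e$ by Theorem 2.1, cancelling $J_e(x)$ in this subgroup forces $J_e(J_e^*(x))=e\odot e$. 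Consequently
\[J_e(J_e^*(x)y)=J_e(J_e^*(x))\odot J_e(y)=(e\odot e)\odot J_e(y)=J_e(y),\]
which is precisely the right strong decomposer identity. The left version is symmetric, starting from $x=J_e(x)J_{e_*}(x)$ and using $J_e(J_{e_*}(x))=e\odot e$.

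With $J_e$ shown to be a (two-sided) strong decomposer on $(G,\cdot)$, Theorem 3.8 of \cite{Decom} furnishes $\Delta\unlhd G$ and $\Omega\subseteq G$ with $G=\Delta\cdot\Omega$ and $J_e=P_\Omega$. The identification $\Delta=J_e^*(G)=J_{e_*}(G)$ is then immediate from the uniqueness in the decompositions $x=J_e^*(x)J_e(x)=J_e(x)J_{e_*}(x)$ combined with the normality of $\Delta$, which forces both complementary factors to lie in $\Delta$.

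The main obstacle is the passage from the canceler identities in (2.3) to the stronger strong decomposer identities; it is precisely the subgroup structure of $e\odot G$ supplied by Theorem 2.1 that permits the cancellation step $J_e(J_e^*(x))\odot J_e(x)=J_e(x)\Rightarrow J_e(J_e^*(x))=e\odot e$, after which the factorization $G=\Delta\cdot\Omega$ is a routine invocation of the characterization theorem from \cite{Decom}.
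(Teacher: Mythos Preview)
Your argument is correct and follows the same route as the paper: invoke Lemma~2.6(C) to obtain the identities~(2.3), deduce that $J_e$ is a (two-sided) strong decomposer on $(G,\cdot)$, and then apply Theorem~3.8 of \cite{Decom} to obtain the factorization $G=\Delta\cdot\Omega$ with $J_e=P_\Omega$. The paper's own proof is terse at the step ``(2.3) $\Rightarrow$ strong decomposer''; your use of the subgroup structure of $e\odot G$ from Theorem~2.1 to cancel in $J_e(J_e^*(x))\odot J_e(x)=J_e(x)$ is a clean and legitimate way to make that implication explicit.
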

\begin{ex}
Consider the real $b$-group-grouplike $(\mathbb{R},+,0,+_b)$. Then,
$J_0=(\;)_b$ that is the $b$-decimal function. Now, take the Klein group-grouplike $K$ and
define $f:K\rightarrow K$ by $f(\eta)=f(e)=e$, $f(\alpha)=f(a)=a$ .
One can easily check
that $f$ is strongly associative (i.e., $f(f(xy)z)=f(xf(yz))=f(xyz)$, see[4]), $e,\eta$-periodic and idempotent
from the both semigroups to themselves. Since $(K,\cdot)$ is a group,
then $f^*(K)=f_*(K)=\{ e,\eta\}\unlhd K$, $f(K)=\{ e,a\}$ and
$(K,\cdot)=\{ e,\eta\}\cdot\{ e,a\}$ where the product is direct. Hence, we have $f=P_{\{ e,a\}}=J_e$
which agrees with the theorem and all mentioned proerties.
\end{ex}
{\bf $e$-Relation, $e$-congruence in $e$-semigs and its induced quotient semigroup.}
Recall from \cite{magmag} the following two essential  equivalence relations.
For every left or right $e$-semig $S$, we define $x \sim_e y$ (resp. $x\sim^e y$) if and
only if $e\odot x=e\odot y$ (resp. $x\odot e=y\odot e$).
It is clear that they are two equivalence
relations in $S$ and $x \sim_e y$ (resp. $x\sim^e y$) if and only if $J_e(x)=J_e(y)$ (resp. $J^e(x)=J^e(y)$).
Hence, we have the following compositions, identities and chain maps:\\
- The injective map $\lambda_e:\overline{S}=S/\sim_e\rightarrow S$ defined
by $\lambda_e(\overline{x}):=e\odot x=J_e(x)$.\\
- Since $\lambda_e:\overline{S}\rightarrow e\odot S$ is a bijection and so invertible, then
we have the bijective map $\phi_e:=\lambda^{-1}_e:e\odot S\rightarrow \overline{S}$ with
$\phi_e(e\odot x)=\overline{x}=\pi_e(x)$.\\
Therefore, denoting
$\iota_{\overline{S}}$ and $\iota_{e\odot S}$ as the identity functions
on related sets, we have the following diagrams:
\begin{equation}
S\xrightarrow{\; J_e\;}e\odot S\xrightarrow{\; \phi_e\;}  \overline{S}
\xrightarrow{\;\lambda_e\;}S\xrightarrow{\;\pi_e \;}\overline{S}
\; , \;
e\odot S\xrightarrow{\;\phi_e\;}\overline{S} \xrightarrow{\;\lambda_e \;}
S\xrightarrow{\; J_e\;}S\xrightarrow{\; \pi_e\;}  \overline{S}
\end{equation}
hence
\begin{equation}
\pi_e\lambda_e=\phi_e\lambda_e=\iota_{\overline{S}}\; , \;\lambda_e\phi_e=\iota_{e\odot S}
\; , \;\lambda_e\pi_e=J_e \; , \; \phi_eJ_e=\pi_e,
\end{equation}
and so we arrive at the following theorem and corollaries.
\begin{theorem}  If $S$ is a left $e$-semig for which $J_e$ is  right $e$-periodic
, then the two right unital semigroups
$S/\sim_e$ and $e\odot S$ are isomorphic.
\end{theorem}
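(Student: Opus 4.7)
The plan is to promote $\lambda_e : S/\sim_e \to e\odot S$, already known to be a bijection from the diagrams $(2.5)$--$(2.6)$, into an isomorphism of right-unital semigroups once both sides carry matching structures. The first task is to confirm that $\sim_e$ is a congruence on $(S,\odot)$, so that the quotient acquires the well-defined operation $\overline{x}\odot\overline{y}:=\overline{x\odot y}$. This follows immediately from Lemma 2.7(A): since $J_e:(S,\odot)\to(S,\odot)$ is an endomorphism, $J_e(x)=J_e(x')$ and $J_e(y)=J_e(y')$ together yield $J_e(x\odot y)=J_e(x)\odot J_e(y)=J_e(x')\odot J_e(y')=J_e(x'\odot y')$, so $x\odot y\sim_e x'\odot y'$.

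Next, I would extract from right $e$-periodicity its key consequence. Combining $J_e(xe)=J_e(x)$ with the left $e$-join law $e\odot xe=e\odot x\odot e$ produces
$$
e\odot x\odot e=e\odot x\qquad\text{for all } x\in S,
$$
so in particular $e\odot e$ is idempotent in $(S,\odot)$. Using this absorption I would check that $e\odot e$ is a right identity for $(e\odot S,\odot)$: for $a=e\odot x$ one computes $a\odot(e\odot e)=(e\odot x\odot e)\odot e=(e\odot x)\odot e=a$. The same identity tells us $x\odot e\sim_e x$, so $\overline{e}$ is a right identity in $S/\sim_e$ under the inherited operation, and $\lambda_e(\overline{e})=e\odot e$ pairs the two right identities.

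It then remains to verify that $\lambda_e$ is a semigroup homomorphism, which is a one-line application of Lemma 2.7(A):
$$
\lambda_e(\overline{x}\odot\overline{y})=J_e(x\odot y)=J_e(x)\odot J_e(y)=\lambda_e(\overline{x})\odot\lambda_e(\overline{y}).
$$
Together with the bijectivity already recorded in $(2.5)$--$(2.6)$, this gives the asserted isomorphism. The only step that carries real content is the congruence property of $\sim_e$ on $(S,\odot)$, since the relation is defined through a mixed-signature expression; but right $e$-periodicity is precisely what Lemma 2.7(A) converts into $J_e$ being an $\odot$-endomorphism, so the obstacle dissolves and the rest is bookkeeping around the inverse pair $\lambda_e,\phi_e$.
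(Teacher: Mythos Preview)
Your argument is correct and follows the same route as the paper: the map $\lambda_e$ set up in (2.5)--(2.6) is already a bijection, and you supply exactly the missing verifications (congruence of $\sim_e$ on $(S,\odot)$, the homomorphism property of $\lambda_e$, and the matching right identities), all of which reduce to the fact from the preceding lemma that right $e$-periodicity makes $J_e$ an $\odot$-endomorphism. The paper itself leaves these checks implicit in the phrase ``and so we arrive at the following theorem,'' so your write-up is, if anything, more complete; note only that the lemma you cite as 2.7(A) is numbered 2.6 in the paper.
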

\begin{cor}
If $S$ is a left monoid-$e$-semigroup,
then  $S/\sim_e\cong e\odot S$ $($as two monoids$)$.
\end{cor}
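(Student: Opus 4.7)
The plan is to derive the corollary directly from Theorem 2.8 and promote the semigroup isomorphism produced there to an isomorphism of monoids. Since $S$ is a left monoid-$e$-semigroup, Lemma 2.7(C) gives that $J_e$ is $e$-periodic, and so in particular right $e$-periodic. The hypothesis of Theorem 2.8 is therefore satisfied, and we obtain the bijective semigroup homomorphism $\lambda_e:\overline{S}\to e\odot S$ defined by $\lambda_e(\overline{x})=e\odot x$, together with its inverse $\phi_e$.

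Next I would verify that both sides carry compatible monoid structures. On the right-hand side, argument (ii) inside the proof of Theorem 2.1 shows that $(e\odot S,\odot)$ is a submonoid of $(S,\odot)$ with identity $e\odot e$; this uses the left $e$-join law together with $e$ being the $\cdot$-identity of $G$, which together give
\[
(e\odot x)\odot(e\odot e)=e\odot(xe)=e\odot x=(e\odot e)\odot(e\odot x).
\]
On the left-hand side, since by Lemma 2.7(C) $J_e$ is a homomorphism on both $(S,\cdot)$ and $(S,\odot)$, the relation $\sim_e$ is a congruence on $(S,\cdot)$; hence $(\overline{S},\cdot)$ inherits from $(S,\cdot)$ the structure of a monoid with identity $\overline{e}$. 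The $e$-join law also makes the quotient operations induced by $\cdot$ and $\odot$ agree on $\overline{S}$, and a direct check
\[
\lambda_e(\overline{x}\cdot\overline{y})=e\odot xy=e\odot x\odot y=(e\odot x)\odot(e\odot y)=\lambda_e(\overline{x})\odot\lambda_e(\overline{y})
\]
confirms that $\lambda_e$ is a homomorphism between the two monoid structures, with $\lambda_e(\overline{e})=e\odot e$, so identities correspond.

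Thus $\lambda_e$ is a bijective semigroup homomorphism between two monoids sending identity to identity, hence a monoid isomorphism, and the corollary follows. I do not anticipate any real obstacle: the genuine work has been done in Theorem 2.1, Lemma 2.7, and Theorem 2.8, and the present statement amounts to observing that the semigroup isomorphism of Theorem 2.8 automatically upgrades to a monoid one as soon as the monoid identity $e\odot e$ on the target is identified with $\overline{e}$ on the source via $\phi_e$.
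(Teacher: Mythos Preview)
Your proposal is correct and follows exactly the route the paper intends: the corollary is stated in the paper without a separate proof, as an immediate consequence of the preceding theorem together with the fact (Lemma~2.6(C)) that in a left monoid-$e$-semigroup the map $J_e$ is $e$-periodic, so the right $e$-periodicity hypothesis is satisfied and the isomorphism upgrades to one of monoids with $\lambda_e(\overline{e})=e\odot e$. Apart from a harmless slip (you write $G$ once where $S$ is meant) and the numbering offset, your argument is precisely the intended one, just spelled out in more detail than the paper bothers to give.
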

\begin{cor}  Let $G$ be a left group-$e$-semigroup,
and put $\Delta_e:=J^*_e(G)$. Then,  $\Delta_e$ is a normal subgroup of $(G,\cdot)$
and $G/\Delta_e=G/\sim_e\cong e\odot G$.
\end{cor}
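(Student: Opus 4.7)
The plan is to combine Theorem 2.7 with Corollary 2.10, noting that the equivalence relation $\sim_e$ is exactly the coset relation of $\Delta_e$. Applying Theorem 2.7 to the left group-$e$-semigroup $(G,\cdot,e,\odot)$, I obtain that $J_e$ is a strong decomposer on $(G,\cdot)$ and that there exist $\Delta \unlhd G$ and $\Omega \subseteq G$ with $G = \Delta \cdot \Omega$ (direct) such that $\Delta = J_e^{*}(G) = \Delta_e$ and $J_e = P_\Omega$. In particular $\Delta_e \unlhd (G,\cdot)$, which is the first assertion of the corollary.

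Next I would identify $\sim_e$ with the coset equivalence of $\Delta_e$. Because $G = \Delta_e \cdot \Omega$ is a direct factorization, every $x \in G$ admits a unique representation $x = \delta(x)\omega(x)$ with $\delta(x) \in \Delta_e$ and $\omega(x) \in \Omega$; by Theorem 2.7 one has $J_e(x) = P_\Omega(x) = \omega(x)$ and $J_e^{*}(x) = \delta(x)$. Two elements $x,y \in G$ therefore satisfy $J_e(x)=J_e(y)$ if and only if they share the same $\Omega$-component, i.e.\ if and only if $x\Delta_e = y\Delta_e$ (equivalently $\Delta_e x = \Delta_e y$, by normality). Since $x \sim_e y \Leftrightarrow J_e(x) = J_e(y)$ by definition, this gives the set-theoretic equality $G/\sim_e \,=\, G/\Delta_e$.

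For the isomorphism with $e\odot G$, I would invoke Lemma 2.6(C), which since $(G,\cdot,e,\odot)$ is in particular a (left) monoid-$e$-semigroup guarantees that $J_e$ is $e$-periodic. Theorem 2.9 then applies and delivers a semigroup isomorphism $G/\sim_e \,\cong\, e\odot G$ via the bijection $\phi_e$ described in (2.5)--(2.6). By Theorem 2.1(iii) the right-hand side $e\odot G$ is a group, so this is automatically an isomorphism of groups, completing the chain $G/\Delta_e = G/\sim_e \cong e\odot G$.

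The only nontrivial step is the identification of $\sim_e$ with the coset relation of $\Delta_e$, which is the main obstacle in that it requires the strong-decomposer machinery of Theorem 2.7 to be available; once that is in hand, the step is forced by the uniqueness of representations in a direct factorization, and everything else is assembled from results already proved in the paper.
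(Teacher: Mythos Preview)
Your argument is correct and is exactly the route the paper has in mind: the corollary is stated without proof because it is meant to be read off from Theorem~2.7 (giving $\Delta_e\unlhd G$ and $J_e=P_\Omega$) together with Corollary~2.10 (or, equivalently, Lemma~2.6(C) plus Theorem~2.9). Your explicit identification of $\sim_e$ with the $\Delta_e$-coset relation via the direct factorization $G=\Delta_e\cdot\Omega$ is the only point the paper leaves implicit, and you handle it correctly.
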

There is an important real example for the above results as follows.\\
Consider the real $b$-group-grouplike $(\mathbb{R},+,0,+_b)$. We have
$$
x\sim_b y\Leftrightarrow 0+_bx=0+_b y\Leftrightarrow (x)_b=(y)_b\Leftrightarrow
x-y\in b\mathbb{Z}\Leftrightarrow x\equiv y\; \mbox{(mod $b$)}
$$
Also,
$$
\overline{x}+_b \overline{y}=\overline{x+_by}=\overline{(x+y)_b}=\overline{x+y}=
\overline{x}+\overline{y}
$$
which agrees to the previous results and so
$$
(\mathbb{R},+_b)/\sim_b=(\mathbb{R},+)/\sim_b\cong \mathbb{R}/b\mathbb{Z}
=\mathbb{R}/J^*_0(\mathbb{R})=\mathbb{R}/[\mathbb{R}]_b
$$
where $\mathbb{R}/b\mathbb{Z}$ is the quotient group $\mathbb{R}$ over the cyclic subgroup
$b\mathbb{Z}=\langle b\rangle$ and $[\mathbb{R}]_b=\{[x]_b:x\in \mathbb{R}\}$. On the other hand,
$$
0+_b\mathbb{R}=\{ (x)_b|x\in \mathbb{R}\}=b[0,1)=\mathbb{R}_b
$$
and the above theorem implies $\mathbb{R}/b\mathbb{Z}\cong (\mathbb{R}_b,+_b)$
that is the same reference $b$-bounded group (the group of
all least nonnegative (real) residues mod $b$, if $b>0$) introduced by the author
and then studied in [3].
Here, $J_e=J_0=(\;)_b=P_{b[0,1)}$ and
$$
\pi_0(x)=\overline{x}=x+b\mathbb{Z}\; , \; \lambda_0(\overline{x})=\lambda_0(x+b\mathbb{Z})
=0+_bx=(x)_b=J_0(x),
$$
$$
\phi_0(0+_bx)=\phi_0((x)_b)=x+b\mathbb{Z}=\pi_0(x).
$$
\section{Identical group-$e$-semigroups and their characterization}
In the most mentioned classes of left $e$-semigs the elememnt $e$ is a left
bi-identity of the second semigroup (i.e., $e\odot(x\odot y)=x\odot y$ for all $x,y$).
Their structures were studied in \cite{magmag}. In this section,
we construct and characterize all left (right and two-sided) identical group-$e$-semigroups,
and also their related $e$-maps will be determined, by using decomposer, associative and canceler
functions on groups (introduced and studied in \cite{Decom}).
\begin{defn}
A left $e$-semig $(S,\cdot,e,\odot)$ is called identical if
 $e$  is a left bi-identity of $(S,\odot)$, that is
\begin{equation}
e\odot xy=e\odot x\odot y=x\odot y\;\; ; \;\; \forall x,y\in S\;\;
(\mbox{left identical $e$-join law}).
\end{equation}
 \end{defn}

Therefore, a left $e$-semig $S$ is  identical  if and only if $J_e(xy)=x\odot y$ (or equivalently
$J_e(x\odot y)=x\odot y$), for all $x,y\in S$. If $S$ is a left identical $e$-semig, then the both
$e$-maps are left $e$-periodic and $J_eJ^e=J^e$. If
$e$ is a right identity of $(S,\cdot)$, then $e\in Z(S,\odot)$
and so $S$ is a (two-sided) $e$-semig.
Similar definition and discussion can be stated for the right case, and
$S$ is an identical $e$-semig if and only if it is both left and right identical $e$-semig,
i.e.,
 \begin{equation}
 e\odot xy=e\odot(x\odot y)=x\odot y=xy\odot e=(x\odot y)\odot e \;\; ; \;\; \forall x,y\in S
 \end{equation}
 Hence $e$ is a (two-sided) bi-identity for the second semigroup. \\
{\bf Note.} It is easy to see
that for monoid-$e$-semigroups, one of  the conditions left, right and two-sided identical
implies the others, and in each of the above cases $e\in Z(S,\odot)$.
\\
The real $b$-group-grouplike is an  identical group-$0$-semigroup. Indeed,
it is an identical group-$\beta$-semigroup, for every $\beta\in b\mathbb{Z}$.
For if $\beta=bk$ where $k\in \mathbb{Z}$, then
$$\beta+_b(x+y)=\beta+_bx+_by=(kb+x+y)_b=(x+y)_b=x+_by.$$
The Klien group-grouplike is also an identical group-$e$-semigroup.

\begin{theorem}  Every identical group-$e$-semigroup $(G,\cdot,e,\odot)$ is a group-joined-unipotent grouplike,
and $(G,\odot,e\odot e )$ is a class united grouplike $($a semigroup with square group property$)$.
\end{theorem}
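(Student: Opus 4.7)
The plan is to combine Theorem 2.1 with the identical $e$-join law and then invoke the characterization of the square-group property recalled in the introduction. First, since a group-$e$-semigroup is in particular a left group-$e$-semigroup, Theorem 2.1 already yields that $(G,\odot)$ is a homogroup whose ideal subgroup is $W:=e\odot G$, with identity element $e\odot e$; as noted in the introduction, the identity of an ideal subgroup of a homogroup is automatically a central idempotent of the whole semigroup.

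Next, I would extract the key consequence of the identical axiom $e\odot xy = e\odot x\odot y = x\odot y = xy\odot e = (x\odot y)\odot e$: the element $e$ acts as a two-sided identity on every $z\in G\odot G$. Associativity of $\odot$ then promotes $e\odot e$ to a bi-identity on $G\odot G$, since for any $u,v\in G$,
\begin{equation*}
(e\odot e)\odot(u\odot v) = e\odot\bigl(e\odot(u\odot v)\bigr) = e\odot(u\odot v) = u\odot v,
\end{equation*}
and symmetrically on the right. This is exactly the bi-identity property required by condition (a) of the introduction, with $e\odot e$ in the role of the distinguished central idempotent.

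With this in hand, the unipotency of $(G,\odot)$ is short. If $z\in G$ is any $\odot$-idempotent, then $z = z\odot z = e\odot(z\odot z) = e\odot z \in W$, and since $W$ is a group its only idempotent is $e\odot e$, so $z=e\odot e$. Hence $(G,\odot)$ is a unipotent homogroup whose unique (central) idempotent is $e\odot e$; by the grouplike/homogroup correspondence recalled in the introduction this makes $(G,\odot,e\odot e)$ a unipotent grouplike, so $(G,\cdot,e,\odot)$ is a group-joined-unipotent grouplike. Finally, the equivalence (a)$\Leftrightarrow$(b)$\Leftrightarrow$(c) from the introduction, applied to the central idempotent $e\odot e$ and the bi-identity property just established, identifies $(G,\odot,e\odot e)$ as a class united grouplike with the square-group property. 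The only subtle point is to use the bi-identity $e\odot e$ rather than $e$ itself (which need not lie in $W$); everything else is a direct application of the preceding results.
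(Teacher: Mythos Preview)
Your argument for unipotency and for the class united (square-group) property is correct and close in spirit to the paper's, though you route through Theorem~2.1/Corollary~2.2 and the homogroup kernel rather than through Corollary~2.4 as the paper does. Your computation $z=z\odot z=e\odot(z\odot z)=e\odot z\in W$ is clean and legitimate.

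However, there is a genuine gap: you never verify the ``joined'' part of the conclusion. In this paper, ``group-joined-$X$'' does not mean merely ``group-$e$-$X$ for the given $e$''; by definition it means that the $t$-join laws
\[
t\odot xy = t\odot x\odot y,\qquad xy\odot t = x\odot y\odot t
\]
hold for \emph{every} $t\in G$, not just for the distinguished $e$. Establishing that $(G,\odot)$ is a unipotent grouplike and that $(G,\cdot,e,\odot)$ is a group-$e$-semigroup does not by itself give this universal join property; it requires a separate computation. The paper supplies it via the identical law:
\[
t\odot xy = e\odot(t\odot xy) = e\odot(txy) = e\odot(t\odot x\odot y) = t\odot x\odot y,
\]
using that $e$ is a bi-identity on $G\odot G$ (so the outer $e\odot$ can be inserted and removed) together with the $e$-join law applied to the $\cdot$-product $t\cdot x\cdot y$; the right-hand version is symmetric. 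This short step is exactly what Corollary~3.3 isolates, and without it the phrase ``group-joined-unipotent grouplike'' is not justified. Add this computation before your final sentence and the proof is complete.
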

\begin{proof}
Since $(G,\cdot,e,\odot)$ is a left (and right) identical $e$-semig, then
$$
t\odot xy=e\odot (t\odot xy)=e\odot (txy)=e\odot (t\odot x\odot y)=
t\odot x\odot y\; , \;
\mbox{(and } xy\odot t=x\odot y\odot t \mbox{)},
$$
for all $t,x,y\in S$. Hence, $(G,\cdot,e,\odot)$ is a group-joined-semigroup.
Also,  $e,e\odot e\in Z(G,\odot)$ and if $\delta$ is an $\odot$-idempotent, then
$$
e\odot e\odot\delta=e\odot\delta=e\odot\delta\odot \delta=\delta\odot \delta=\delta.
$$
Thus, $e\odot e$ is an identity for $It(G,\odot)$ and  Corollary 2.4 implies that $(G, \odot)$ is a grouplike.
Also, the property $(3.1)$ implies that the grouplike $(G,\odot)$ is class united, and hence
if $\delta\odot \delta=\delta$ then
$$
\delta=e\odot \delta \odot \delta=e\odot \delta=e\odot e\odot \delta=e\odot \delta^{-1}\odot \delta
\odot\delta=e\odot \delta^{-1}\odot \delta
=
e\odot e,
$$
that means it is unipotent.
\end{proof}
Due to the above proof,  we have the following result in more general case.
\begin{cor}
Every left identical $e$-semig is a left semigroup-joined-semigroup $($josemig$)$.
Therefore, if there exists an $e_0\in S$ such that
 $(S,\cdot,e_0,\odot)$ is a left identical $e_0$-semig, then $(S,\cdot,e,\odot)$ is a
  left $e$-semig, for every $e\in S$.
\end{cor}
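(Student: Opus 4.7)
The plan is to observe that the chain of equalities carried out in the proof of Theorem 3.2, namely
$t\odot xy = e_0\odot(t\odot xy) = e_0\odot(txy) = e_0\odot(t\odot x\odot y) = t\odot x\odot y$,
depends only on (i) the left bi-identity property of $e_0$ in $(S,\odot)$, (ii) associativity of $\odot$, and (iii) the left $e_0$-join law, and it never exploits any particular feature of the element $t$. Consequently the same derivation is valid for every $t\in S$, yielding $t\odot xy = t\odot x\odot y$ for all $t,x,y\in S$. Reading this as the left $t$-join law for a variable $t$, I conclude that $(S,\cdot,t,\odot)$ is a left $t$-semig for every $t\in S$, which is precisely the definition of a left josemig.

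\textbf{Second statement.} Given some $e_0\in S$ for which $(S,\cdot,e_0,\odot)$ is a left identical $e_0$-semig, the first part immediately implies that $S$ is a left josemig, and by definition this means $(S,\cdot,e,\odot)$ is a left $e$-semig for every $e\in S$.

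\textbf{Main obstacle.} I do not expect a genuine obstacle: the corollary is essentially a re-reading of the calculation already performed in the proof of Theorem 3.2. The only point that needs a little care is confirming that $t\odot xy$ may be treated as the $\odot$-product $t\odot(xy)$, so that the bi-identity axiom $e_0\odot(u\odot v)=u\odot v$ legitimately applies with $u=t$ and $v=xy$; once this is noted, the rest is a formal manipulation using associativity of $\odot$ and two applications of the left $e_0$-join law.
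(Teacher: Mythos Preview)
Your proposal is correct and follows essentially the same route as the paper: the corollary has no separate proof there, the text simply says ``Due to the above proof'' and points back to the chain $t\odot xy=e\odot (t\odot xy)=e\odot (txy)=e\odot (t\odot x\odot y)=t\odot x\odot y$ from Theorem~3.2, noting that nothing in it requires $(G,\cdot)$ to be a group. Your remark that $t\odot xy$ is itself an $\odot$-product (so the bi-identity axiom applies at the first and last steps) is exactly the small check needed, and your reading of the conclusion as the left $t$-join law for arbitrary $t$ is the intended one.
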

Starting with an arbitrary group $(S,\cdot)$ and
a desired function $f:S\rightarrow S$, we use another binary
operation $\cdot_f$ in $S$ defined by $x\cdot_f y=f(xy)$ (namely $f$-multiplication of
``$\cdot$",  see \cite{magmag}). For example, the $b$-addition $+_b$ is a $f$-multiplication
($f$-addition) where $f=(\; )_b$.
Recall that a semigroup $(S,\cdot)$
is called surjective if the function $\cdot:S\times S\rightarrow S$ is
surjective (i.e. $SS=S$).
It is obvious that if $(S,\cdot)$ is left or right unital (e.g., all monids and groups) then
$S$ is surjective. Also, note that
if $(S,\cdot)$ is surjective,
then $\cdot_f=\cdot_g$ if and only if $f=g$ (indeed,
$\cdot_f=\cdot_g$ if and only if $f|_{SS}=g|_{SS}$).
By using $f$-multiplications, we construct and characterizes
(left, right and two-sided) identical group-e-semigroups
in several different ways and gives the general form of their second binary operation
in the next theorem.
\begin{theorem} $($Construction and characterization of identical group-e-semigroups$)$\\
 Let $(G,\cdot,e)$ be a group and $\odot$ another  binary operation in $G$. Then,
 $(G,\cdot,e,\odot)$ is an identical group-e-semigroup if and only if there exists a $($unique$)$ strong decomposer
 function $f:(G,\cdot)\rightarrow (G,\cdot)$ such that $\odot=\cdot_f$.
 Hence, denoting the set of all binary operations in $G$ by $Bio(G)$ we have
 $$
 \{ \odot\in Bio(G)\; |\; \mbox{$(G,\cdot,e,\odot)$ is an identical group-e-semigroup} \}$$
  $$
  =
  \{ \cdot_f\; |\; \mbox{$f:(G,\cdot)\rightarrow (G,\cdot)$ is associative and idempotent
  } \}
  $$
  $$
  =
  \{ \odot\in Bio(G)\; |\; \mbox{$(G,\cdot,e,\odot)$ is a group-$e$-semigroup} \}\cap
  \{ \cdot_f\; |\; f:(G,\cdot)\rightarrow (G,\cdot) \}
  $$
\end{theorem}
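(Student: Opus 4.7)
The plan is to prove the biconditional characterization first, then deduce the two set equalities from it together with results of \cite{Decom}.

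For the forward direction, assume $(G,\cdot,e,\odot)$ is an identical group-$e$-semigroup and set $f:=J_e$, so $f(x)=e\odot x$. The left identical $e$-join law reads $e\odot xy = e\odot x\odot y=x\odot y$, which gives $x\odot y = f(xy)$, i.e., $\odot=\cdot_f$. Theorem 2.7 (applied to this $e$-semig) already tells us that $J_e:(G,\cdot)\to(G,\cdot)$ is a strong decomposer, so $f$ is the required function. Uniqueness follows because $(G,\cdot)$ is a group, hence a surjective semigroup, and for surjective semigroups $\cdot_f=\cdot_g$ forces $f=g$ (as noted just before the theorem statement).

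For the converse, suppose $f:(G,\cdot)\to(G,\cdot)$ is a strong decomposer and set $x\odot y:=f(xy)$. The main technical step is to extract from the strong decomposer laws $f(f^{*}(x)y)=f(y)$ and $f(xf_{*}(y))=f(x)$ the two canceler identities $f(f(x)y)=f(xy)=f(xf(y))$ and then the strong associativity $f(xf(yz))=f(xyz)=f(f(xy)z)$. These are exactly the consequences derived in \cite{Decom}: from $x=f^{*}(x)f(x)=f(x)f_{*}(x)$ one substitutes $f(x)f_{*}(x)$ for $x$ inside $f$, applies the strong decomposer identities, and collects terms. With these identities in hand, associativity of $\odot$ is immediate from strong associativity, and the identical $e$-join laws
\begin{equation*}
e\odot xy = f(xy) = x\odot y,\qquad e\odot x\odot y = f(f(x)y)=f(xy)=x\odot y,
\end{equation*}
(and the symmetric right versions) follow directly from the canceler property. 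Hence $(G,\cdot,e,\odot)$ is an identical group-$e$-semigroup.

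For the first set equality, I would invoke the characterization from \cite{Decom} asserting that a function on a group is a strong decomposer if and only if it is associative and idempotent; so the two sets coincide. For the second set equality (the intersection formula), the inclusion ``$\subseteq$'' is immediate since every identical group-$e$-semigroup is in particular a group-$e$-semigroup and its operation is the $f$-multiplication $\cdot_{J_e}$. For the reverse inclusion ``$\supseteq$'', assume $(G,\cdot,e,\odot)$ is a group-$e$-semigroup with $\odot=\cdot_g$ for some $g:G\to G$. Then $e\odot xy = g(exy)=g(xy)=x\odot y$ and similarly $xy\odot e=x\odot y$, so the identical laws hold automatically; combined with the biconditional just proved, such a $g$ must be strong decomposer and the two operations match.

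The main obstacle is the passage from the bare strong decomposer identities to the canceler and strong associativity identities needed to verify the semigroup axiom and the identical laws for the converse direction. Once that bridge is in place (citing \cite{Decom} for the standard consequences), the rest of the proof consists of routine substitutions, and the two set identities follow with only a short extra argument.
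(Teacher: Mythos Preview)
Your proposal is correct and follows essentially the same approach as the paper: both directions of the biconditional hinge on Theorem~2.7 (that $J_e$ is a strong decomposer) and on the characterizations from \cite{Decom} linking strong decomposer with canceler/strongly associative/idempotent, and uniqueness is obtained from surjectivity of the group operation. The only organizational difference is that the paper first develops the theory for general left $e$-semigs (via the auxiliary functional equations (3.3)--(3.4) and the condition ``left $e$-periodic and idempotent'') and then specializes to groups, whereas you work directly in the group setting; your argument for the ``$\supseteq$'' inclusion in the intersection formula, using $g(exy)=g(xy)$ because $e$ is the group identity, is in fact a bit more transparent than what the paper leaves implicit.
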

\begin{proof}
First, note that if  $S$ is a left identical $e$-semig, then $\odot=\cdot_{J_e}$ and $J_e:(S, \cdot)\rightarrow (S,\cdot)$ is left $e$-periodic
and idempotent. Conversely, if  $(S,\cdot,e,\odot)$ is an $e$-semig such that $\odot=\cdot_{f}$, for some left $e$-periodic function
$f:(S, \cdot)\rightarrow (S,\cdot)$, then $f=J_e=f_e$ (because $f(x)=f(ex)=e\cdot_f x=e\odot x=J_e(x)$),
and so $x\odot y=x\cdot_f y=f(xy)=J_e(xy)=e\odot xy$ for all $x,y$, and
$$f^2(x)=f(ef(ex))=e\odot e\odot x=e\odot ex=e\odot x=J_e(x)=f(x),$$ hence $f$ is
idempotent and the $e$-semig is identical. Therefore, a left $e$-semig $(S,\cdot,e,\odot)$
is identical if and only if there exists a unique left $e$-periodic function
$f:(S, \cdot)\rightarrow (S,\cdot)$ such that $\odot=\cdot_{f}$. \\
On the other hand, one can construct $e$-semigs by $f$-multiplications as follows.
Suppose $(S,\cdot)$ is an arbitrary semigroup, fix an element $e\in S$
and consider $f:S\rightarrow S$.
Then,
$$
e\cdot_f(xy)=f(e(xy))\; ,\; e\cdot_f(x\cdot_f y)=f(ef(xy))
$$
Thus, $(S,\cdot,e,\cdot_f)$ is a left $e$-semig if and only if $f$
is associative (i.e., $f(f(xy)z)=f(xf(yz))$) and
satisfies the following functional equation in $(S,\cdot)$
 \begin{equation}
f(exy)=f(ef(xy))\;\; : \;\; \forall x,y\in S.
 \end{equation}
Also, $(S,\cdot,e,\cdot_f)$ is a left identical $e$-semig if and only if
 \begin{equation}
f(exy)=f(ef(xy))=f(xy)\;\; : \;\; \forall x,y\in S.
 \end{equation}
So, if $f_ef=f_e$ (resp. $f_ef=f_e=f$, where $f_e(t):=f(et)$) then $f$ satisfies
 $(3.3)$ (resp. $(3.4)$), and if $(S,\cdot)$ is surjective, then
the converse is also true.
Hence, if  $SS=S$ (e.g., if $S$ is a group) then $f$ satisfies $(3.4)$ if and only if $f$ is left $e$-periodic and idempotent
(equivalently $f_e=f=f^2$).
We emphasis that the $e$-semigs $(S,\cdot,e,\cdot_f)$
do not generate all $e$-semigs (as one can see in \cite{magmag}).\\
Now, if $(S,\cdot,e,\odot=\cdot_f)$ is a left identical $e$-semig, then putting $g=f_e$ we have
$\odot=\cdot_f=\cdot_g$ and $g$ is associative, left $e$-periodic and idempotent (equivalently
strongly associative and left $e$-periodic).
It is worth noting that $\odot=\cdot_f$ does not imply $f$ is unique ($f=J_e$) except
that $f$ is left $e$-periodic. \\
Note that if $f$ is associative and satisfies $(3.4)$, then
it is strongly associative. Because
$$
f(xyz)=f(exyz)=f(ef(xyz))=f(f(exy)z)=f(f(xy)z)=f(xf(yz)).
$$
Now one can drive the statements by
applying the above arguments of $e$-semigs for group-$e$-semigroups.
\end{proof}
A grouplike $(\Gamma, \centerdot)$ is said to be
$f$-grouplike if and only if there exists a group $(G,\cdot)$ and an associative function
 $f:G\rightarrow G$ such that $(\Gamma, \centerdot)=(G,\cdot_f)$. The next lemma says that the unipotent grouplike
 mentioned in Theorem 3.2 is, in fact, a $f$-grouplike.
\begin{cor}
$($a$)$ A left or right group-$e$-semigroup $(G,\cdot,e,\odot)$ is identical if and only if
$(G,\odot)$ is a $f$-grouplike.\\
$($b$)$ Suppose $(S,\cdot,e,\odot)$ is a left or right  monoid-$e$-semigroup. Then,
 $S$ is a left identical monoid-$e$-semigroup if and only if
$\odot$ is a $f$-multiplication $($of course  $f$ is unique, that is the same left $e$-map$)$.
\end{cor}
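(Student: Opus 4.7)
The overall plan is to derive both parts as consequences of Theorem 3.4. For (a) I will additionally invoke Theorem 3.2 to supply the grouplike structure on $(G,\odot)$, and for (b) the monoid hypothesis on the first operation will replace the group hypothesis and yield the uniqueness of $f$.

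For part (a), in the forward direction I would invoke Theorem 3.4 to obtain a strong decomposer (equivalently, associative and idempotent) function $f:(G,\cdot)\to(G,\cdot)$ with $\odot=\cdot_f$, together with Theorem 3.2 to know that $(G,\odot)$ is a unipotent grouplike. These two facts together exhibit $(G,\odot)$ as an $f$-grouplike, with ambient group $(G,\cdot)$ and associative function $f$. Conversely, being an $f$-grouplike with respect to $(G,\cdot)$ means $\odot=\cdot_f$ for some associative $f$; since $(G,\cdot,e,\odot)$ is already a group-$e$-semigroup, we are sitting in the second set of the chain of equalities of Theorem 3.4, which forces identicality. As a direct sanity check, since $e$ is the identity of $(G,\cdot)$ we get $x\odot y=f(xy)=f(e\,xy)=e\odot xy$, and combining with the left $e$-join law $e\odot xy=e\odot x\odot y$ yields the bi-identity condition $e\odot x\odot y=x\odot y$.

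For part (b), the same template applies without the grouplike machinery. If the monoid-$e$-semigroup is identical, then $x\odot y=e\odot xy=J_e(xy)$, so $\odot=\cdot_{J_e}$ is an $f$-multiplication. Conversely, $\odot=\cdot_f$ together with the monoid identity $e$ gives $e\odot xy=f(exy)=f(xy)=x\odot y$, and combined with the left $e$-join law this supplies the identicality condition. Uniqueness of $f$ follows from the observation already made in the paper that $(S,\cdot)$, being a monoid, is surjective (i.e., $SS=S$); hence two functions producing the same $f$-multiplication must agree on $SS=S$, and substituting $x=e$ in $f(x)=f(ex)=e\odot x$ pins $f$ down as the left $e$-map $J_e$. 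The main obstacle I anticipate is the interpretational one in part (a): the $f$-grouplike definition existentially quantifies over an ambient group, whereas Theorem 3.4 needs the presentation $\odot=\cdot_f$ with respect to the specific group $(G,\cdot)$ at hand. I would expect the intended reading, strongly suggested by the preceding remark that \emph{the unipotent grouplike mentioned in Theorem 3.2 is, in fact, an $f$-grouplike}, to be that the ambient group is taken to be $(G,\cdot)$; under that reading the corollary reduces cleanly to Theorem 3.4.
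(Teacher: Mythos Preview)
Your proposal is correct and matches the paper's intended derivation: the corollary is stated without proof immediately after Theorem 3.4 (and the remark linking Theorem 3.2 to $f$-grouplikes), so the paper expects exactly what you do---read both parts off the set equalities in Theorem 3.4, with (b) handled via the surjectivity $SS=S$ of a monoid to get uniqueness $f=J_e$. Your flagged interpretational point in (a), that the ambient group in the $f$-grouplike definition should be taken as the given $(G,\cdot)$, is the intended reading and is the only way the converse falls directly under the third displayed set of Theorem 3.4; the paper does not address this subtlety explicitly, so your caveat is well placed rather than a gap.
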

Now, with do attention to the proof of Theorem 3.4, one can see the following consequences for $e$-semigs.
\begin{cor}
For every left $e$-semig   $(S,\cdot,e,\odot)$ the following statements are equivalent\\
(a) $S$ is a left identical $e$-semig,\\
(b) There exists a $($unique$)$ left $e$-periodic function $f:S\rightarrow S$ such that $\odot=\cdot_f$, \\
(c) $\odot=\cdot_{J_e}$,\\
(d)  There exists a $($unique$)$ left $e$-periodic and idempotent function $f:S\rightarrow S$ such that $\odot=\cdot_f$,\\
(e)  $f=J_e$ is the only  function from $S$ to $S$ such that $\odot=\cdot_f$.\\
Hence, we can construct and
characterize all left identical $e$-semigs as follows
$$
\{ \odot\in Bio(S)\; |\; \mbox{$(S,\cdot,e,\odot)$ is a left identical $e$-semig} \}
$$
 $$
=\{\cdot_{f}\; |\; \mbox{$f:(S,\cdot)\rightarrow (S,\cdot)$ is strongly associative and left $e$-periodic} \}
$$
%
 $$
  =
  \{ \odot\in Bio(S)\; |\; \mbox{$(S,\cdot,e,\odot)$ is a left  $e$-semig} \}\cap
  \{ \cdot_f\; |\; \cdot_f=\cdot_{f_e} \}.
  $$
  \end{cor}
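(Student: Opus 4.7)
The plan is to mimic the proof of Theorem 3.4, since Corollary 3.6 is essentially its general $e$-semig version: all the calculations done there for groups continue to work at the level of semigroups once one is careful to use left $e$-periodicity in place of surjectivity of $(S,\cdot)$. I would organize the argument as a short cycle of implications together with a separate verification of the two set-theoretic identities at the end.

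First I would establish (a) $\Leftrightarrow$ (c). Assuming (a), the left identical $e$-join law gives $x\odot y = e\odot xy = J_e(xy) = x\cdot_{J_e}y$, so $\odot=\cdot_{J_e}$. Conversely, assuming (c) together with the standing hypothesis that $S$ is a left $e$-semig, we have $x\odot y = J_e(xy) = e\odot xy$, which is exactly the left identical condition; the companion identity $e\odot xy = e\odot x\odot y$ is the left $e$-join law, which is already in force.

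Next I would close the loop via (c) $\Rightarrow$ (d) $\Rightarrow$ (b) $\Rightarrow$ (a). For (c) $\Rightarrow$ (d), take $f=J_e$: every element of $J_e(S)$ has the form $e\odot x\in S\odot S$, and left identicality forces $J_e$ to fix $S\odot S$, so $J_e$ is idempotent; combined with the left $e$-join law $J_e(ex)=e\odot e\odot x=J_e^{2}(x)=J_e(x)$ this gives left $e$-periodicity. Implication (d) $\Rightarrow$ (b) is immediate, and for (b) $\Rightarrow$ (a) the computation from the proof of Theorem 3.4 carries over verbatim: $e\odot xy = e\cdot_f xy = f(exy)=f(xy)=x\odot y$, which is (a). For (e) one observes that $\odot=\cdot_f$ forces $f$ to equal $J_e$ on $S\cdot S$, and if $f$ is left $e$-periodic then $f(x)=f(ex)=J_e(ex)=J_e(x)$ for all $x$, which pins $f$ down uniquely. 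The care point here is that without surjectivity of $(S,\cdot)$ there may exist functions $f$ outside this class with $\cdot_f = \cdot_{J_e}$, so (e) must be read as uniqueness within the natural class of left $e$-periodic representatives, as the text's discussion after $(3.4)$ makes clear.

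For the two displayed set identities, the first equality follows from (a) $\Leftrightarrow$ (d) together with the observation already extracted in the proof of Theorem 3.4 that for a function $f$ satisfying $(3.4)$, associativity upgrades to strong associativity; conversely, strong associativity plus left $e$-periodicity gives back the functional equation $(3.4)$ and hence the left identical property of $(S,\cdot,e,\cdot_f)$. The second equality is more bookkeeping: the condition $\cdot_f = \cdot_{f_e}$ translates to $f|_{SS}$ being left $e$-periodic in the sense $f(xy)=f(exy)$, which for any $\odot=\cdot_f$ making $(S,\cdot,e,\odot)$ into a left $e$-semig is exactly the extra ingredient needed to pass from left $e$-semig to left identical $e$-semig via (b). The main obstacle throughout is the bookkeeping between \emph{left $e$-periodic} and \emph{idempotent}, and in particular keeping track of where the hypotheses of Theorem 3.4 (group structure, hence surjectivity) can be weakened to plain left $e$-semig and where one must explicitly impose left $e$-periodicity on $f$ to recover uniqueness.
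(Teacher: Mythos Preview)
Your proposal is correct and follows essentially the same approach as the paper, which simply points back to the computations inside the proof of Theorem~3.4; you have faithfully extracted the general $e$-semig arguments from that proof (the first paragraph there already gives (a)$\Leftrightarrow$(b)$\Leftrightarrow$(c)$\Leftrightarrow$(d), and the later paragraphs give the strong-associativity upgrade and the $\cdot_f=\cdot_{f_e}$ discussion). Your explicit caveat about reading (e) as uniqueness among left $e$-periodic representatives matches the paper's own remark that $\odot=\cdot_f$ forces $f=J_e$ only under left $e$-periodicity.
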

\begin{cor}
Let $(S,\cdot)$ be a surjective semigroup with the fixed element $e$.
Then, there is a one-to-one correspondence between
  the set of all binary operations $\odot$ such that
$(S,\cdot,e,\odot)$ is left identical $e$-semig and the set of all strongly associative
and left $e$-periodic functions from $(S,\cdot)$ to $(S,\cdot)$.
\end{cor}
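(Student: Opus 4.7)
The plan is to display the correspondence explicitly as the map
$\Phi\colon f\mapsto \cdot_f$, sending a strongly associative and left $e$-periodic function $f:(S,\cdot)\to(S,\cdot)$ to the binary operation $\cdot_f$ on $S$ defined by $x\cdot_f y:=f(xy)$. I would then verify (i) that $\Phi$ lands in the target set, (ii) that it is surjective, and (iii) that it is injective, the last of which is the one place where the surjectivity hypothesis $SS=S$ is actually used.

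For (i) and (ii), I would simply invoke Corollary 3.6, which states the equality
$\{\odot\in Bio(S)\mid (S,\cdot,e,\odot)\text{ is a left identical }e\text{-semig}\}
=\{\cdot_f\mid f:(S,\cdot)\to(S,\cdot)\text{ is strongly associative and left }e\text{-periodic}\}.$
This equality immediately tells us that $\Phi$ has the prescribed codomain and is surjective onto it: any left identical $e$-semig operation $\odot$ is of the form $\cdot_{J_e}$, and the associated $e$-map $J_e$ is strongly associative and left $e$-periodic.

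For (iii), the key observation (already recorded in the discussion preceding Theorem 3.4) is that for a surjective $(S,\cdot)$ one has $\cdot_f=\cdot_g$ iff $f|_{SS}=g|_{SS}$; since $SS=S$ by hypothesis, this reduces to $f=g$. Concretely, if $\Phi(f)=\Phi(g)$, then $f(xy)=x\cdot_f y=x\cdot_g y=g(xy)$ for all $x,y\in S$, so $f$ and $g$ agree on every product $xy$, hence on all of $S$ because every element of $S$ is such a product. This establishes injectivity of $\Phi$ and completes the bijection.

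The only potential subtlety, and the single place where I would be careful, is ensuring that the $f$ recovered from a given $\odot$ really is \emph{strongly associative and left $e$-periodic} (rather than just associative, or only satisfying the weaker functional equation $(3.3)$). This is already handled inside the proof of Theorem 3.4, where it is shown that for surjective $S$ the identical law $(3.4)$ is equivalent to $f$ being left $e$-periodic and idempotent, and that associativity together with $(3.4)$ upgrades $f$ to strongly associative. So no new work is required beyond citing Corollary 3.6 and the surjectivity cancellation for $f$-multiplications; the corollary is essentially a packaging of those two facts into a bijection.
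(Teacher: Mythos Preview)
Your proposal is correct and follows exactly the route the paper intends: the corollary is stated without proof because it is an immediate consequence of Corollary~3.6 (giving surjectivity of $\Phi:f\mapsto\cdot_f$ onto the set of left identical $e$-semig operations) together with the remark preceding Theorem~3.4 that $\cdot_f=\cdot_g$ implies $f=g$ when $SS=S$ (giving injectivity). Your explicit unpacking of these two ingredients, and your care in noting that the recovered $f=J_e$ is genuinely strongly associative and left $e$-periodic via the argument inside the proof of Theorem~3.4, is precisely what the paper leaves implicit.
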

\begin{cor}
If $(S,\cdot)$ is  a left $e$-unital semigroup and $\odot$ another binary
operation in $S$, then
$(S,\cdot,e,\odot)$ is a left identical $e$-semig if and only if
there exists a $($unique$)$ strongly associative function $f$ from $(S,\cdot)$ to $(S,\cdot)$
 such that $\odot=\cdot_f$.
\end{cor}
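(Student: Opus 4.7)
The plan is to obtain this corollary as an immediate specialization of Corollary 3.7 (or, equivalently, of Corollary 3.6), by exploiting two elementary consequences of the hypothesis that $(S,\cdot)$ is left $e$-unital.

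First, I will verify that a left $e$-unital semigroup is automatically surjective. Indeed, for any $s\in S$ we have $s=es\in SS$, whence $SS=S$. This is exactly the hypothesis of Corollary 3.7, so we are free to apply it and, in particular, to use the uniqueness of $f$ with $\odot=\cdot_f$ (two such functions must agree on $SS=S$, hence everywhere).

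Second, I will observe that, under the same hypothesis, the condition ``left $e$-periodic'' on a function $f:S\to S$ is vacuous: since $ex=x$ for every $x\in S$, the identity $f(ex)=f(x)$ holds trivially for any $f$. Consequently the clause ``strongly associative and left $e$-periodic'' appearing in Corollary 3.7 collapses to ``strongly associative''.

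With these two remarks in hand, the equivalence is read off directly from Corollary 3.7: $(S,\cdot,e,\odot)$ is a left identical $e$-semig iff there exists a unique strongly associative and left $e$-periodic $f:(S,\cdot)\to(S,\cdot)$ with $\odot=\cdot_f$, which now just says there exists a unique strongly associative $f$ with $\odot=\cdot_f$. There is no real obstacle here; the only point worth checking carefully is that both directions genuinely go through Corollary 3.7 (for the forward direction we take $f:=J_e$, and strong associativity together with left $e$-periodicity are exactly the properties established in the proof of Theorem 3.4; for the reverse direction, surjectivity is what turns the assumption $\odot=\cdot_f$ into a well-defined and unique data $f$).
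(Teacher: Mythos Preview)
Your argument is correct and is exactly the intended derivation: the paper states Corollary 3.8 without proof because it is an immediate specialization of Corollaries 3.6/3.7, using precisely the two observations you make (left $e$-unital $\Rightarrow$ surjective, and left $e$-periodicity becomes vacuous when $ex=x$). The only cosmetic remark is that in your final parenthetical, surjectivity is really what guarantees \emph{uniqueness} of $f$, while the fact that $\cdot_f$ yields a left identical $e$-semig comes from the left $e$-unital hypothesis via the analysis in the proof of Theorem 3.4; but this is already implicit in your appeal to Corollary 3.7.
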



\begin{thebibliography}{5}
\bibitem{Clif}
A.H. Clifford, G.P. Preston {\it Algebraic Theory of Semigroups}, Vol. II. Math.
Surveys, vol. 7. Amer. Math. Soc., Providence (1967).
\bibitem{Zeroid}
A. H. Clifford, and D. D. Miller, Semigroups having zeroid elements, Amer. J. Math. 70, 117-125 (1948).
\bibitem{Grpk}
M. H. Hooshmand, {\it Grouplikes}, Bull. Iran. Math. Soc. 39(1), 65-86 (2013).
\bibitem{Decom}
M. H. Hooshmand and H. Kamarul Haili, {\it Decomposer and
Associative Functional Equations}, {Indag. Mathem.}, N.S. 18(4), 539-554 (2007).
\bibitem{magmag}
M. H. Hooshmand, {\it Magma-joined- magmas: A class of new algebraic structures}, J. Alg. Sys.
3(2), 171-198 (2016).
\bibitem{S2}
M. H. Hooshmand, {\it Characterization and axiomatization of all semigroups whose
square is group}, J. Alg. struc. app. 2(2), 1-8  (2015).
\bibitem{Fgrp}
M. H. Hooshmand and N. H. Sarmin, {\it $f$-Grouplikes}, J. Math. Ext., 12(1), 41-54 (2018).
\bibitem{Homo}
R. P. Hunter, {\it On the structure of homogroups with applications to the theory of compact connected semigroups},
 Fund. Math. 52, 69–102 (1961).
 \bibitem{Ssemi}
A. Nagy, {\it Special Classes of Semigroups}, Kluwer Academic Publishers, 2001.
\end{thebibliography}
\end{document}